\newcommand{\mg}{\mathfrak g }
\newcommand{\ms}{\mathfrak s }
\newcommand{\mn}{\mathfrak n }
\newcommand{\mz}{\mathfrak z }
\newcommand{\mv}{\mathfrak v }
\newcommand{\mh}{\mathfrak h }
\newcommand{\bil}{g}
\newcommand{\lela}{ \left\langle}
\newcommand{\rira}{\right\rangle}
\newcommand{\lra}{\longrightarrow}
\newcommand{\mcQ}{\mathcal Q}
\renewcommand{\Im}{{\rm Im}}
\newcommand{\R}{\mathbb R}
\newcommand{\Z}{\mathbb Z}
\DeclareMathOperator{\End}{End}
\DeclareMathOperator{\ad}{ad}
\DeclareMathOperator{\tr}{tr}
\numberwithin{equation}{section}
 \newtheorem{teo}{Theorem}[section]
 \newtheorem{pro}[teo]{Proposition}
 \newtheorem{cor}[teo]{Corollary}
 \newtheorem{lm}[teo]{Lemma}
 \newtheorem{defi}[teo]{Definition}
 \theoremstyle{definition}
 \newtheorem{ex}[teo]{Example}
 \newtheorem{remark}[teo]{Remark}
\newcommand{\nc}{\newcommand}
\nc{\Iso}{\operatorname{Iso}}
 \nc{\iso}{\mathfrak{iso}}
 \nc{\sso}{\mathfrak{so}}
\nc{\Ad}{\operatorname{Ad}} 
\nc{\Sym}{\mathrm{Sym}}
  \nc{\pr}{\operatorname{pr}} 
 \nc{\Dera}{\operatorname{Dera}} \nc{\Auto}{\operatorname{Auto}}
 \nc{\LL}{{\rm L}}
\nc{\dd}{{\rm d}}
\nc{\Id}{{\rm Id}}
\begin{document}

\title{Conformal Killing symmetric tensors on Lie groups}
\author{Viviana del Barco}
\address{Universidad Nacional de Rosario, CONICET, 2000, Rosario, Argentina}
\email{delbarc@fceia.unr.edu.ar}

\author{Andrei Moroianu}
\address{Université Paris-Saclay, CNRS,  Laboratoire de mathématiques d'Orsay, 91405, Orsay, France}
\email{andrei.moroianu@math.cnrs.fr}

\begin{abstract} We introduce the notion of metric Lie algebras of Killing type, which are characterized by the fact that all conformal Killing symmetric tensors are sums of Killing tensors and multiples of the metric tensor. We show that if a Lie algebra is either 2-step nilpotent, or 2- or 3-dimensional, or 4-dimensional non-solvable, or 4-dimensional solvable with 1-dimensional derived ideal, or has an abelian factor, then it is of Killing type with respect to any positive definite metric. 
\end{abstract}

\subjclass[2010]{53D25
, 22E25
, 53C30, 
22E15
} 
\keywords{Conformal Killing tensors, Riemannian Lie groups} 
\maketitle


\section{Introduction}

Symmetric Killing tensors on (pseudo-)Riemannian manifolds are symmetric tensors whose symmetrized covariant derivative vanishes. They define first integrals of the geodesic flow on the tangent bundle, they are polynomial in the momenta, and were first considered in the physics literature, see e.g. \cite{penrose}, \cite{wood}. 

More generally, one can define conformal Killing tensors as symmetric tensors whose symmetrized covariant derivative is the symmetric product of the metric with some other symmetric tensor. They have the remarkable property of being conformally invariant \cite[Lemma 3.2]{HMS16} and still define polynomial first integrals for null geodesics.

Recently, Killing and conformal Killing symmetric tensors also appeared in the framework of geometric inverse problems \cite{salo2}, \cite{salo1}, integrable systems \cite{mat11} and Riemannian geometry \cite{coll}, \cite{dairbekov}, \cite{HMS16}, \cite{HMS17}. 

In order to explain in more detail the relationship between Killing and conformal Killing tensors, let us denote by $(M,g)$ a Riemannian manifold, by $\dd:\Sym^p \mathrm{T}M\lra \Sym^{p+1} \mathrm{T}M$ the symmetrized covariant derivative and by $\LL : \Sym^p \mathrm{T}M\lra \Sym^{p+2} \mathrm{T}M$ the product with the metric tensor. Then $K\in \Gamma(\Sym^p \mathrm{T}M)$ is Killing if and only if $\dd K=0$, and conformal Killing if and only if $\dd K\in \Im(\LL)$. 

Since the operators $\dd$ and $\LL$ commute, it turns out that if $K\in \Gamma(\Sym^p \mathrm{T}M)$ is Killing, then $K+\LL R$ is conformal Killing for every $R\in \Gamma(\Sym^{p-2} \mathrm{T}M)$. The conformal Killing tensors obtained in this way are called of Killing type. We are interested in the existence of genuine conformal Killing tensors, i.e. which are not of Killing type. For instance, any conformal vector field on a Riemannian manifold which is not a Killing vector field provides a genuine symmetric tensor in this sense.

In the present paper we will study this problem for left-invariant symmetric tensors on Riemannian Lie groups, where it can be interpreted in terms of an algebraic problem on the corresponding metric Lie algebras. The paper is organized as follows.

In Section 2 we review Killing and conformal Killing symmetric tensors on Riemannian manifolds, with a special focus on Riemannian Lie groups. As a preliminary result, we show that every left-invariant conformal vector field is Killing, so the genuine example provided above does not apply in our case. We introduce the notion of metric Lie algebra of Killing type (defined by the fact that all the left-invariant conformal Killing tensors on the corresponding Lie group are of Killing type). In Section 3 we use a particular decomposition associated to 2-step nilpotent Lie algebras to show, in Theorem \ref{teo:2nilp}, that they are always of Killing type, for any possible metric. The proof is based on an inductive argument explained in Proposition \ref{pro:Hr}.

We then show that every metric Lie algebra carrying a certain ``naturally reductive''-like decomposition is of Killing type. The details are given in Proposition \ref{lm:lemme}. As applications of this result we show that extensions by derivations and central extensions of Lie algebras endowed with ad-invariant metrics are of Killing type (Corollary \ref{cor:abideal} and Corollary \ref{cor:centext}).

Using these results and the classification of low-dimensional metric Lie algebras, we show in Section 5 that every metric Lie algebra of dimension 2 or 3 is of Killing type, and we obtain a similar result in Section 6 for two particular classes of Lie algebras of dimension 4 (those which are either non-solvable or have 1-dimensional derived ideal).

Based on the above results, there is perhaps not enough evidence in order to conjecture that all metric Lie algebras are of Killing type. However, all our attempts in order to construct a counterexample have failed so far. We hope to make further progress on this question in a subsequent work.

{\bf Acknowledgments.} This work was supported by the Procope Project No. 57445459 (Germany) /  42513ZJ (France).

\section{Conformal Killing tensors}

\subsection{Generalities}

We follow the notations from \cite{HMS17}. 
Let $V$ be a vector space of dimension $n$ endowed with a (positive definite) metric $g$. We define $\Sym^0V=\R$ and for $p\ge 1$ we denote by $\Sym^pV$ the subspace of symmetrized $p$-tensors on $V$:
\[
v_1\cdot\ldots\cdot v_p:=\sum_{\sigma\in \mathfrak S_p}v_{\sigma(1)}\otimes \ldots \otimes v_{\sigma(p)},
\]
where $v_i\in V$ and $\mathfrak S_p$ is the set of permutations of $\{1,\ldots,p\}$. In particular we have $v\cdot u=v\otimes u+u\otimes v$ for every $u,v\in V$. We further denote by $\Sym^*V$ the space of  all symmetric tensors on $V$, that is, $\Sym^*V=\bigoplus_{p\geq 0}\Sym^pV$.
  
The metric $g$ on $V$ induces a metric, also denoted by $g$, on $\Sym^pV$ as follows:
\[
g(v_1\cdot\ldots \cdot v_p,u_1\cdot\ldots \cdot u_p):=\sum_{\sigma\in \mathfrak S_p}\prod_{i=1}^ng(v_i,u_{\sigma(i)}).
\]
This allows us to identify every symmetric tensor $K\in \Sym^pV$ with a multilinear symmetric map on $V$, also denoted by $K$, through the identity
\begin{equation}\label{eq:identif}
K(v_1, \ldots, v_p):=g(K,v_1\cdot \ldots\cdot v_p), \qquad \mbox{ for all }v_1, \ldots, v_p\in V.
\end{equation}
Given $v\in V$ and $K\in \Sym^pV$, we denote by $v\lrcorner K$ the contraction of $K$ with $v$, that is, $(v\lrcorner K)(v_1, \ldots, v_{p-1}):=K(v,v_1, \ldots, v_{p-1})$, for every $v_1, \ldots, v_p\in V$. The linear maps 
\[
\begin{array}{rcl}
v\cdot \, :\Sym^pV&\lra& \Sym^{p+1}V\\
K&\mapsto& v\cdot K
\end{array}
\quad \mbox{ and } \quad
\begin{array}{rcl}
v\lrcorner :\Sym^pV&\lra& \Sym^{p-1}V\\
K&\mapsto& v\lrcorner K
\end{array}
\]
are adjoint to each other with respect to the above defined metric on $\Sym^pV$.

As usual, symmetric endomorphisms of $V$ are identified with symmetric bilinear maps on $V$ through the metric $g$, and thus with symmetric tensors in $\Sym^2V$ via \eqref{eq:identif}.
Given a symmetric endomorphism $K$ of $V$, the corresponding symmetric 2-tensor in $\Sym^2V$, also denoted by $K$, is  
\begin{equation}\label{endsym}
K=\frac12 \sum_{i=1}^n  K e_i\cdot e_i,
\end{equation}
 where $\{e_i\}$ is an orthonormal basis of $V$. 
If $\LL$ denotes the symmetric tensor $\sum_{i=1}^ne_i\cdot e_i$, the symmetric endomorphism and the symmetric bilinear form corresponding to $\LL$ are $2\mathrm{Id}$ and $2g$ respectively.

Given $M\in \End(V)$, we denote by $M^*$ its metric adjoint. The symmetric part of $M$ has its corresponding symmetric tensor, which we denote by $S_M\in \Sym^2V$. From \eqref{endsym} we have
\begin{equation}\label{eq:SM}
S_M=\frac12\sum_{i=1}^n\frac12(M+M^*)e_i\cdot e_i=\frac12\sum_{i=1}^n M e_i\cdot e_i.
\end{equation}
In addition, the endomorphism $M$ extends as a derivation of the algebra $\Sym^pV$. In particular, on decomposable symmetric 2-tensors, it satisfies
\[
M(u\cdot v)=Mu\cdot v+u\cdot Mv.
\]
If $K$ is a symmetric endomorphism of $V$, then the action of $M$ on the corresponding symmetric tensor $K$ is 
\[
M\left(\frac12 \sum_{i=1}^n Ke_i\cdot e_i\right)=\frac12 \sum_{i=1}^n (MKe_i\cdot e_i+Ke_i\cdot Me_i)=\sum_{i=1}^n MKe_i\cdot e_i=2S_{MK}.
\]
Consequently, this action, viewed in $\End(V)$, reads $M(K)=(MK+KM^*)$. So if $M$ is symmetric, $M(K)=(MK+KM)$ and if $M$ is skew-symmetric, $M(K)=[M,K]$.

In particular, for $\LL=\sum_{i=1}^ne_i\cdot e_i$ we have $M(\LL)=4S_M$. Since $M$ acts as a derivation, we further get
\begin{eqnarray}
M(\LL\cdot K)&=&4S_M\cdot K+\LL\cdot  M(K), \quad \mbox{ for every } K\in \Sym^pV.
\end{eqnarray}
This implies, by immediate induction, that for every $j\geq 1$ and every  $K\in \Sym^pV$,
\begin{equation}\label{eq:ML}
[M,\LL^j]\cdot K =4jS\cdot \LL^{j-1}\cdot K.
\end{equation}

The multiplication by $\LL$ in $\Sym^pV$ induces a linear operator, which we denote also by $\LL$, namely
\begin{equation*}
\LL:\Sym^pV\lra \Sym^{p+2}V, \qquad K\mapsto \sum_{i=1}^ne_i\cdot e_i\cdot K.
\end{equation*}

The contraction with the metric gives rise to the linear map
\begin{equation*}
\Lambda:\Sym^pV\lra \Sym^{p-2}V, \qquad K\mapsto \sum_{i=1}^ne_i\lrcorner e_i\lrcorner K.
\end{equation*}
Notice that $\Lambda$ vanishes on $\Sym^0V$ and on $\Sym^1V\simeq V$. Moreover, if $K$ is a symmetric endomorphism of $V$, viewed as an element in $\Sym^2V$ as in \eqref{endsym}, then $\Lambda K=\tr K$. We define, for each $p\geq 0$, $\Sym_0^pV:=\ker(\Lambda:\Sym^pV\lra \Sym^{p-2}V)$ as the subspace of  {\em trace-free} symmetric tensors. 

Since the contraction and symmetric product by vectors are metric adjoints, it follows that $\Lambda$ and $\LL$ are adjoint operators. This implies that every $K\in \Sym^pV$ decomposes uniquely as
\begin{eqnarray}\label{eq:decK}
K=K_0+\LL R, \qquad \mbox{ where } \quad \Lambda K_0=0 \mbox{ and }R\in \Sym^{p-2}V.
\end{eqnarray}
We call $K_0$ the trace-free part of $K$.

Finally, we consider the linear map $\deg :\Sym^* V\lra \Sym^* V$ which on symmetric $p$-tensors $K$ verifies $\deg K= p K$. One can easily  check that the operators previously defined verify the commutation law
\begin{equation}
[\Lambda,\LL]=2n\Id+4\deg\label{eq:LL}.
\end{equation}

Let now $(M,g)$ be a Riemannian manifold with Levi-Civita connection $\nabla$. The linear maps $\LL$, $\Lambda$ and $\deg$ extend to sections of the vector bundles $\Sym^p \mathrm{T}M$ for all $p\ge0$. 

We shall consider further geometric operators on these vector bundles.  Let $\{e_i\}_{i=1}^n$ denote a local orthonormal frame and define the symmetrized covariant derivative
\begin{equation}
\label{eq:dddefi}
\dd:\Gamma(\Sym^p\mathrm{T}M)\lra \Gamma(\Sym^{p+1}\mathrm{T}M), \qquad K\mapsto \dd K=\sum_{i=1}^ne_i\cdot \nabla_{e_i}K,
\end{equation}
and its formal adjoint
\begin{equation}
\label{eq:deltadefi}
\delta:\Gamma(\Sym^p\mathrm{T}M)\lra \Gamma(\Sym^{p-1}\mathrm{T}M), \qquad K\mapsto \delta K=-\sum_{i=1}^ne_i\lrcorner \nabla_{e_i}K.
\end{equation}
These operators are related to the linear operators $\Lambda$ and $\LL$ previously defined as follows:
\begin{eqnarray}
&[\Lambda,\delta]=0=[\LL,\dd],\quad[\Lambda,\dd]=-2\delta.\label{eq:LD}&
\end{eqnarray}

We are now in position to introduce the objects which are the subject of study of the paper.
\begin{defi}
Let $(M,g)$  be a Riemannian manifold. A symmetric $p$-tensor $K\in \Gamma(\Sym^p\mathrm{T}M)$ is a {\em Killing tensor} if $\dd K=0$, and a {\em conformal Killing tensor} if $\dd K=\LL B$ for some symmetric tensor $B\in \Gamma(\Sym^{p-1}\mathrm{T}M)$. 
\end{defi}
One can easily check that (conformal) Killing 1-tensors on $(M,g)$ correspond to (conformal) Killing vector fields.

\begin{remark}\label{rem:killing}
A symmetric tensor is conformal Killing if and only if its trace-free part is conformal Killing. Indeed, given a symmetric $p$-tensor $K$  on $(M,g)$ with decomposition $K=K_0+\LL R$ as in  \eqref{eq:decK}, then, since $\dd$ and $\LL$ commute, we have
\[
\dd K= \dd K_0 +\LL \dd R.
\]
Hence $\dd K= \LL B$ for  some symmetric $p-1$-tensor $B$ if and only if $\dd K_0=\LL(B-\dd R)$.
\end{remark}

From \cite{HMS17} we have a necessary and sufficient condition for a trace-free symmetric tensor to be conformal Killing:
\begin{pro}[\cite{HMS17}]\label{K0cK}
A trace-free symmetric tensor $K_0$ is conformal Killing if and only if $\dd K_0=a_0\LL \delta K_0$, where $a_0:=-\frac{1}{n+2p-2}$.
\end{pro}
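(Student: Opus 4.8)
The plan is to introduce the auxiliary tensor $W \ce \dd K_0 - a_0 \LL\delta K_0$ and to prove that it is precisely the trace-free part of $\dd K_0$; the statement then follows because, by definition, $K_0$ is conformal Killing exactly when that trace-free part vanishes. The easy implication is immediate: if $\dd K_0 = a_0\LL\delta K_0$, then $\dd K_0 = \LL B$ with $B = a_0\delta K_0 \in \Sym^{p-1}\mathrm{T}M$, so $K_0$ is conformal Killing by definition. Everything therefore reduces to the converse, and the heart of the matter is a single computation of $\Lambda W$.

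For that computation I would use the commutation relations already established. From $[\Lambda,\dd]=-2\delta$ in \eqref{eq:LD} together with $\Lambda K_0=0$ (as $K_0$ is trace-free), I get $\Lambda\dd K_0 = \dd\Lambda K_0 - 2\delta K_0 = -2\delta K_0$. Next, using $[\Lambda,\LL]=2n\Id+4\deg$ from \eqref{eq:LL}, the fact that $[\Lambda,\delta]=0$ forces $\Lambda\delta K_0 = \delta\Lambda K_0 = 0$, and that $\deg(\delta K_0)=(p-1)\delta K_0$ since $\delta K_0\in\Sym^{p-1}\mathrm{T}M$, I obtain $\Lambda\LL\delta K_0 = (2n+4(p-1))\delta K_0 = (2n+4p-4)\delta K_0$. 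Combining these gives $\Lambda W = -2\delta K_0 - a_0(2n+4p-4)\delta K_0$. The point is that the specific value $a_0=-\frac{1}{n+2p-2}$ is exactly the one making $a_0(2n+4p-4) = -\frac{2(n+2p-2)}{n+2p-2} = -2$, so the two terms cancel and $\Lambda W = 0$.

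Having shown $\Lambda W = 0$, I would conclude as follows. Since $W$ is trace-free and $a_0\delta K_0\in\Sym^{p-1}\mathrm{T}M$, the identity $\dd K_0 = W + \LL(a_0\delta K_0)$ is exactly a decomposition of $\dd K_0$ of the form \eqref{eq:decK}; by the uniqueness of that decomposition, $W$ is the trace-free part of $\dd K_0$. Finally, $K_0$ is conformal Killing if and only if $\dd K_0\in\Im(\LL)$, which by \eqref{eq:decK} is equivalent to its trace-free part being zero, i.e. to $W=0$, i.e. to $\dd K_0 = a_0\LL\delta K_0$. I do not expect any serious obstacle here; the only care needed is the bookkeeping of the degree operator in $[\Lambda,\LL]$ (one must remember that $\delta K_0$ sits in $\Sym^{p-1}\mathrm{T}M$, not $\Sym^{p}\mathrm{T}M$), together with the appeal to the uniqueness in \eqref{eq:decK} to identify $W$ with the genuine trace-free part rather than merely some trace-free tensor.
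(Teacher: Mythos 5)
Your argument is correct and complete: the computation $\Lambda\bigl(\dd K_0 - a_0\LL\delta K_0\bigr) = -2\delta K_0 - a_0(2n+4p-4)\delta K_0 = 0$ checks out, and identifying this tensor with the trace-free part of $\dd K_0$ via the uniqueness of the decomposition \eqref{eq:decK} (which holds because $\Sym^{p+1}V=\ker\Lambda\oplus\Im\LL$ orthogonally) correctly reduces the conformal Killing condition $\dd K_0\in\Im(\LL)$ to the vanishing of that trace-free part. The paper itself offers no proof of this proposition, citing \cite{HMS17} instead, so there is nothing to compare against; your self-contained derivation is exactly the standard one and uses only the commutation relations \eqref{eq:LL} and \eqref{eq:LD} already available in the text.
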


It is always possible to construct conformal Killing tensors from a Killing tensor. Indeed, given a Killing $p$-tensor  $K$ and  a symmetric $p-2$-tensor $R$, one has $$\dd( K+\LL R)=\LL\dd R,$$ so that $K+\LL R$ is conformal Killing. 
Loosely speaking, the conformal Killing tensors constructed in this way are {\em Killing  up to the image of $\LL$}. Clearly, not every conformal Killing tensor is of this form. For instance, conformal Killing vector fields on  $(M,g)$ which are not Killing, are not of the form $K+\LL R$ with $K$ Killing.  We introduce the following definition:

\begin{defi}
A conformal Killing $p$-tensor $K$ is {\em of Killing type} if there exist a symmetric tensor $R$ such that $K + \LL R$ is Killing. 
\end{defi}

We show in the next result, among other equivalences, that a similar relation as the one mentioned above holds in this case: a symmetric Killing tensor is of Killing type if and only if its trace free part is of Killing type too.

\begin{pro}\label{pro:extK0}
Let $K$ be a conformal Killing symmetric $p$-tensor and let $B$ be the unique $p-1$-tensor satisfying $\dd K=\LL B$. If $K_0$ denotes the trace-free part of $K$, the following statements are equivalent:
\begin{enumerate}
\item \label{it1} $K$ is of Killing type, 
\item \label{it2} $K_0$ is of Killing type,
\item \label{it3} $B\in \Im (\dd)$,
\item \label{it4} $\delta K_0\in \Im(\dd)$.
\end{enumerate} 
\end{pro}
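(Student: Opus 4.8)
The plan is to reduce all four equivalences to a single observation about the operator $\LL$. The pivotal fact I would use is that $\LL\colon\Sym^pV\lra\Sym^{p+2}V$ is injective at each point, being multiplication by the nonzero element $\sum_{i}e_i\cdot e_i$ in the symmetric algebra, which has no zero divisors; hence $\LL$ is injective on sections as well. This is precisely what makes the tensor $B$ in the statement unique, and it will let me ``cancel $\LL$'' throughout. I would first isolate the following general principle: if $T$ is \emph{any} conformal Killing tensor and $B_T$ is the unique tensor with $\dd T=\LL B_T$, then $T$ is of Killing type if and only if $B_T\in\Im(\dd)$. Indeed, $T+\LL R'$ is Killing for some $R'$ exactly when $\dd T+\LL\dd R'=0$, where I use $[\LL,\dd]=0$ from \eqref{eq:LD}; rewriting this as $\LL(B_T+\dd R')=0$ and invoking injectivity of $\LL$, it is equivalent to $B_T=-\dd R'$, that is, to $B_T\in\Im(\dd)$.

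With this principle in hand the argument is short. The equivalence \eqref{it1}$\Leftrightarrow$\eqref{it2} is immediate: writing $K=K_0+\LL R$ as in \eqref{eq:decK}, the tensor $K+\LL R'$ equals $K_0+\LL(R+R')$, so $K$ is of Killing type with correction $R'$ precisely when $K_0$ is of Killing type with correction $R+R'$. Applying the principle to $T=K$ gives \eqref{it1}$\Leftrightarrow$\eqref{it3} at once, since here $B_T=B$.

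It remains to bring in \eqref{it4}, which I would obtain by applying the principle to $T=K_0$. By Remark \ref{rem:killing} the trace-free part $K_0$ is again conformal Killing, so Proposition \ref{K0cK} applies and yields $\dd K_0=a_0\LL\delta K_0$ with $a_0=-\frac{1}{n+2p-2}\neq0$. Hence the unique tensor $B_{K_0}$ with $\dd K_0=\LL B_{K_0}$ is $B_{K_0}=a_0\delta K_0$, and the principle gives that $K_0$ is of Killing type if and only if $a_0\delta K_0\in\Im(\dd)$, i.e. (as $a_0\neq0$) if and only if $\delta K_0\in\Im(\dd)$; this is \eqref{it2}$\Leftrightarrow$\eqref{it4}. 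Combined with the equivalences already established, the cycle closes and all four statements are equivalent. The computation is entirely formal; the only points genuinely requiring the hypotheses are the injectivity of $\LL$ — which underlies both the uniqueness of $B$ and every cancellation — and the identification $B_{K_0}=a_0\delta K_0$ together with $a_0\neq0$ coming from Proposition \ref{K0cK}. I do not anticipate any serious obstacle beyond tracking these cancellations carefully.
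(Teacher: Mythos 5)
Your proof is correct and follows essentially the same route as the paper's: the ``general principle'' you isolate is exactly the paper's argument for \eqref{it1}$\Leftrightarrow$\eqref{it3} (cancel the injective operator $\LL$ after commuting it past $\dd$), and the paper likewise obtains \eqref{it2}$\Leftrightarrow$\eqref{it4} by applying that same equivalence to $K_0$ via Remark \ref{rem:killing} and Proposition \ref{K0cK}. The only difference is presentational, namely that you state the cancellation step once as a lemma and invoke it twice.
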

\begin{proof}
Using \eqref{eq:decK}, it is easy to check that \eqref{it1} and \eqref{it2} are equivalent. Next, since $\dd$ and $\LL$ commute, for every $R\in \Gamma(\Sym^{p-2}\mathrm{T}M)$  we have
\[
\dd(K+\LL R)=\LL (B+\dd R).
\]
As $\LL$ is injective, this implies that $K+\LL R$ is Killing if and only if $B=-\dd R$, thus proving  that \eqref{it1} is equivalent to \eqref{it3}.

Finally, we will show the equivalence between \eqref{it2} and \eqref{it4}. First notice that $K_0$ is conformal Killing by Remark \ref{rem:killing}, so Proposition \ref{K0cK} yields $\dd K_0=\LL(- \frac{1}{n+2p-2} \delta K_0)$. By the equivalence between \eqref{it1} and \eqref{it3} we get that $K_0$ is of Killing type if and only if $ \delta K_0\in \Im(\dd)$.
\end{proof}

\subsection{Conformal Killing tensors on Riemannian Lie groups}

In this section we describe the geometry of Riemannian Lie groups and study their left-invariant Killing and conformal Killing tensors.

Let $G$ be a connected Lie group endowed with a  Riemannian metric $g$ which is invariant under left translations, and let $\mg$ denote the Lie algebra of $G$. The metric $g$ is determined by its value on the tangent space at the identity, which we identify with $\mg$. Let $\nabla$ denote the Levi-Civita connection of $(G,g)$. Koszul's formula evaluated on left-invariant vector fields $X,Y,Z$ on $G$ reads
\begin{equation}\label{eq.Koszul}
\lela \nabla_X Y,Z\rira=\frac12( \lela [X,Y],Z\rira+\lela [Z,X],Y\rira+\lela [Z,Y],X\rira).
\end{equation}

From this formula we obtain in particular that the covariant derivative of two left-invariant vector fields is again left-invariant. We identify a left-invariant vector field $X$ with its value $x\in\mg$ at the identity, so that \eqref{eq.Koszul} becomes 
\begin{equation}\label{eq.Koszul1}
\nabla_xy=\frac12\left( [x,y]-\ad_x^*y-\ad_y^*x\right),
\end{equation}
where $\ad_x^*$ denotes the metric adjoint of $\ad_x$ with respect to $g$.

We are interested in studying left-invariant symmetric tensors on $(G,g)$ which satisfy the conformal Killing condition. To this purpose, we will consider the left-invariant sections of  $\Sym^p\mathrm{T}G$, which are determined by  their values at the identity. This is why, from now on, we will identify the space of left-invariant symmetric tensors on $(G,g)$ with elements in $\Sym^p\mg$. In particular, we will say that an element in $\Sym^p\mg$ is a (conformal) Killing tensor, if its corresponding left-invariant symmetric tensor on $(G,g)$ has this property.

The differential operators $\dd$ and $\delta$ of $(G,g)$ defined in \eqref{eq:dddefi} and \eqref{eq:dddefi} preserve the left-invariant sections of $\Sym^*\mathrm{T}G$, so they define algebraic operators, also denoted by $\dd:\Sym^p\mg \lra \Sym^{p+1}\mg$ and $\delta:\Sym^p\mg \lra \Sym^{p-1}\mg$. If $\{e_1, \ldots, e_n\}$ is an orthonormal basis of $\mg$, then
\begin{equation}
\label{eq:ddelta}
\dd K:=\sum_{i=1}^ne_i\cdot \nabla_{e_i}K,\qquad \delta K:=-\sum_{i=1}^ne_i\lrcorner \nabla_{e_i}K.
\end{equation}
Note that $\dd$ and $\delta$ are metric adjoints if and only if $\mg$ is unimodular.

For each $x\in \mg$, let $S_x$ denote the symmetric tensor   $S_{\ad_x}$ associated to $\ad_x$ as in \eqref{eq:SM}.
Using \eqref{eq.Koszul1}, we obtain:
\[
\dd x=\sum_{i=1}^n
e_i\cdot \nabla_{e_i}x=\frac12\sum_{i=1}^n
e_i\cdot (\ad_{e_i}x-\ad_{e_i}^*x-\ad_x^*e_i)=-\sum_{i=1}^n
e_i\cdot \ad_x e_i,
\]
whence
\begin{equation}\label{eq:symdiffx}
\dd x=-(\ad_x+\ad_x^*)=-2 S_x, \qquad \mbox{ for every }x\in \mg.
\end{equation}

\begin{ex}\label{ex:biinv}
Let $G$ be a connected Lie group with Lie algebra $\mg$ and let $g$ be a bi-invariant Riemannian metric on $G$, that is, such that left- and right-translations are isometries. Then the metric $g$ on $\mg$ is ad-invariant, namely
\begin{equation}\label{eq:adinv}
g(\ad_x y,z)+g(y,\ad_xz)=0,\qquad \mbox{ for every } x,y,z\in \mg.
\end{equation}
It is well known that in this case $\mg$ decomposes as an orthogonal direct sum of orthogonal ideals $\mg=\ms\oplus \mz$, where $\ms$ is a compact semisimple Lie algebra and $\mz$ is the center of $\mg$ (see for instance \cite{AB}).

For these metric Lie algebras, the operator $\dd$ vanishes identically. Indeed, equation \eqref{eq:adinv} states that $\ad_x$ is skew-symmetric with respect to $g$. Therefore, in view of \eqref{eq:symdiffx}, $\dd x=0$ for every $x\in \mg$ and thus vanishes on every symmetric $p$-tensor. In other words, every symmetric tensor in $\Sym^*\mg$ is a Killing tensor.
\end{ex}

The following result shows that on any Riemannian Lie group, the space of left-invariant conformal Killing tensors of degree $p\leq 2$ coincides with the space of Killing tensors of the given degree. For conformal vector fields this fact was already known (see \cite{TCX}); we include the proof here for the sake of completeness.

\begin{pro} \label{pro:degleq2} Let $K$ be a left-invariant  symmetric $p$-tensor on $(G,g)$ with $p\leq 2$. If $K$ is a  conformal Killing tensor then $K$ is a Killing tensor.
\end{pro}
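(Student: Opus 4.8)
The plan is to split the argument according to the degree $p\in\{0,1,2\}$ and reduce everything to showing that the tensor $B$ with $\dd K=\LL B$ actually vanishes, rather than merely lying in $\Im(\dd)$. For $p=0$ the tensor $K$ is a constant, so $\dd K=0$ trivially and $K$ is Killing. For $p=1$, write $K=x\in\mg$; by \eqref{eq:symdiffx} we have $\dd x=-2S_x$, where $S_x\in\Sym^2\mg$ is the symmetric part of $\ad_x$. The conformal Killing condition says $\dd x=\LL B$ for some scalar $B$, i.e. $-2S_x=B\cdot g$ (recalling that $\LL$ applied to a scalar is $B$ times the metric tensor). Taking traces, or equivalently comparing the symmetric endomorphisms, gives $\ad_x+\ad_x^*=-Bn^{-1}\cdot(\text{something})$; the point is that $S_x$ must be a multiple of the identity endomorphism. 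I would then show this multiple is forced to be zero, so that $\dd x=0$ and $x$ is Killing.

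The cleanest way to kill the scalar is a trace argument: the endomorphism associated to $S_x$ is $\tfrac12(\ad_x+\ad_x^*)$, and its trace equals $\tr(\ad_x)$, which is the value of the unimodularity character on $x$. If $S_x=\lambda\,\Id$ then $\tr S_x = \lambda n$, while directly $\tr S_x = \tr(\ad_x)$. One must argue that $\lambda=0$. Here I expect the key input to be an intrinsic relation: since $S_x$ is a gradient-type object (indeed $\ad_x^*$ is determined by the metric), the only way the full symmetric part of $\ad_x$ can be a nonzero multiple of the identity is obstructed. Concretely, if $\tfrac12(\ad_x+\ad_x^*)=\lambda\Id$ with $\lambda\neq0$, then $\ad_x=\lambda\Id+A$ with $A$ skew-symmetric, forcing $\ad_x$ to be invertible; but $\ad_x$ always annihilates $x$ itself (as $[x,x]=0$), a contradiction. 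Hence $\lambda=0$ and $\dd x=0$.

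For $p=2$ the strategy is to invoke Proposition~\ref{pro:extK0}: it suffices to treat the trace-free part $K_0$ and to prove that $\delta K_0\in\Im(\dd)$. By Proposition~\ref{K0cK}, $\dd K_0=a_0\LL\delta K_0$ with $a_0=-\tfrac1{n+2}$, and $\delta K_0$ is a vector, i.e. an element of $\Sym^1\mg\simeq\mg$. The natural move is to write $\delta K_0=y\in\mg$ and to ask whether $y=-\dd R$ for some scalar $R$; but $\dd$ of a scalar is zero, so $\Im(\dd)\cap\Sym^1\mg=0$, meaning I must instead show directly that $\delta K_0=0$. I would establish this by pairing the equation $\dd K_0=a_0\LL\delta K_0$ against suitable test tensors, or by taking $\delta$ of both sides and using the commutation relations \eqref{eq:LD} together with \eqref{eq:LL} to extract an equation forcing $|\delta K_0|=0$.

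The main obstacle I anticipate is the $p=2$ case: I must convert the conformal Killing identity into an algebraic statement about the symmetric endomorphism $K_0$ and the structure constants of $\mg$, and then prove $\delta K_0=0$ without any hypothesis on $\mg$ beyond its being a metric Lie algebra. The expected mechanism is that $\delta K_0$, being the obstruction vector, satisfies a self-adjointness or norm identity—obtained by contracting $\dd K_0=a_0\LL\delta K_0$ with $K_0$ and integrating by parts via the adjunction between $\LL$ and $\Lambda$ (and between $\cdot$ and $\lrcorner$)—which pins its norm to a nonnegative multiple of itself with the wrong sign unless it is zero. Care will be needed because $\dd$ and $\delta$ are genuine metric adjoints only in the unimodular case, so in the general case I expect to use instead the specific formula \eqref{eq:symdiffx} and the derivation property of $\ad_x$ to compute $\dd$ and $\delta$ explicitly on $\Sym^2\mg$, reducing the vanishing of $\delta K_0$ to a purely representation-theoretic identity in the Lie algebra.
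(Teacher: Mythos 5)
Your $p\le 1$ argument is correct and close in spirit to the paper's: the paper contracts $\dd K=-2S_K=c\LL$ twice with $K$ and uses $g(\ad_K^*K,K)=g(K,[K,K])=0$, while you observe that $\tfrac12(\ad_x+\ad_x^*)=\lambda\Id$ with $\lambda\neq 0$ would make $\ad_x=\lambda\Id+A$ ($A$ skew) invertible, contradicting $\ad_x x=0$. Both hinge on the same fact and either is fine.

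The $p=2$ case, however, is a genuine gap: you have correctly reduced the problem to showing $\delta K_0=0$ (equivalently $B=0$), but the mechanism you propose for doing so does not work. Pairing $\dd K_0=a_0\LL\delta K_0$ against $\LL\delta K_0$ and moving operators across via the adjunctions $\Lambda=\LL^*$ and \eqref{eq:LD}, \eqref{eq:LL} only reproduces the identity $-2|\delta K_0|^2=a_0(2n+4)|\delta K_0|^2$, which is a tautology --- it is exactly how the constant $a_0=-\tfrac1{n+2}$ is fixed in Proposition~\ref{K0cK} --- so no sign-definite obstruction comes out of it; and the global ``integration by parts'' between $\dd$ and $\delta$ is unavailable in the non-unimodular case, as you yourself note, without offering a substitute. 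The idea that actually closes the case in the paper is elementary but different: evaluate both sides of $\dd K=\LL B$ on the diagonal, obtaining $x\lrcorner x\lrcorner x\lrcorner\,\dd K=6\,g([x,Kx],x)$ and $x\lrcorner x\lrcorner x\lrcorner\,\LL B=4|x|^2g(B,x)$, hence $g([x,Kx],x)=\tfrac23|x|^2g(B,x)$ for all $x$; plugging in an eigenvector $x$ of the symmetric endomorphism $K$ kills the left-hand side since $[x,Kx]=0$, so $g(B,x)=0$ on an orthonormal eigenbasis and $B=0$. Without this (or an equivalent) pointwise evaluation exploiting the eigenvectors of $K$, your outline does not establish the $p=2$ statement.
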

\begin{proof}
Let $K$ be a left-invariant conformal Killing 1-tensor on $(G,g)$, that is $K$ is an element in $\mg$. Then there exists $c\in \R$ such that and $\dd K=c \LL$. 
From \eqref{eq:symdiffx}, we know that $\dd K=-2S_{K}$ so contracting $\dd  K$ twice with $K$ we get
\begin{eqnarray*}
2c |K|^2&=&-2g(S_K K,K)=-g(\ad_K^*K,K)=0.
\end{eqnarray*}
Therefore, either $c=0$ or $K=0$, and in both situations, $K$ is a Killing vector field.

Assume now that $K$ is a conformal Killing 2-tensor so that $\dd K=\LL B$ for some $B\in \mg$. For every $x\in \mg$ and making use of \eqref{eq:dddefi}, one has
\[
x\lrcorner x\lrcorner x\lrcorner \dd K=6g([x,Kx],x)\quad\mbox{ and }\quad
x\lrcorner x\lrcorner x\lrcorner(\LL \cdot B)=4|x|^2g(B,x).
\]
Therefore, the equality $\dd K=\LL B$ implies
\[g([x,Kx],x)=\frac23|x|^2g(B,x), \qquad \mbox{ for all }x\in \mg.\]
In particular, if $x$ is an eigenvector of the symmetric endomorphism $K$, the previous equation gives
\[
\,g(B,x) |x|^2= 0.
\] 
Since $K$ is symmetric and $\mg$ has a basis of eigenvectors of $K$, $B$ must be zero and thus $K$ is Killing.
\end{proof}

Despite this result, one should notice that for $p\ge 3$, there exist left-invariant symmetric $p$-tensors which are conformal Killing but not Killing. Trivial examples are provided by symmetric $p$-tensors of the form $\LL R$ where $R$ is an arbitrary symmetric $p-2$-tensor  which is not Killing. In fact, in this situation, $\dd (\LL R)=\LL\dd R\ne 0$ since the operator $\LL$ is injective. In the following example we construct a conformal Killing tensor which is neither Killing nor in the image of  $\LL$.

\begin{ex} Let $\mg$ be the Lie algebra of dimension six having an orthonormal basis $\{e_1, \ldots, e_6\}$ satisfying  the Lie bracket relations
\[
[e_1,e_2]=e_4,\quad [e_1,e_3]=e_5,\quad [e_2,e_3]=e_6.
\]
Then for each $i=4, \ldots, 6$, $e_i$ is in the center of $\mg$ so it defines a left-invariant Killing vector field and thus a Killing 1-tensor. Moreover, $T:=e_1\cdot e_6-e_2\cdot e_5+e_3\cdot e_ 4$ is a symmetric Killing 2-tensor on $\mg$ (see \cite{dBM}), that is, $\dd T=0$.

Since $\dd$ is a derivation, the symmetric 3-tensor $K:=T\cdot (e_4+e_5+e_6)$ is also a Killing tensor. It is easy to check that $\Lambda K=2(e_1-e_2+e_3)$ so $K$ is not trace-free. 

Consider the decomposition of $K=K_0+\LL R$ as in \eqref{eq:decK}; here $R$ is a vector in $\mg$. As pointed out in the previous subsection, the trace-free part $K_0$ is a conformal Killing 3-tensor. We claim that $K_0$ is not Killing. 

Indeed, $\dd K_0=-\LL \dd R$, so if $K_0$ were Killing, $R$ would be Killing too, which would imply $R\in \mz$. In addition, 
$\Lambda K=\Lambda \LL R$ which, by \eqref{eq:LD} and \eqref{eq:LL}, gives 
\[2(e_1-e_2+e_3)=16R\]
which clearly is not in the center $\mz$ of $\mg$, thus leading to a contradiction. Hence $K_0$ is conformal Killing but not Killing. Notice that, in addition, $K_0$ is of Killing type since $K=K_0+\LL R$ is Killing.
\end{ex}

In view of the above considerations, it makes sense to introduce the following notion:

\begin{defi}\label{def:kt} A metric Lie algebra $(\mg,g)$ is called {\em of Killing type} if every conformal Killing tensor is of Killing type.
\end{defi}

By the equivalence between \eqref{it1} and \eqref{it3} in Proposition \ref{pro:extK0}, and the fact that $\LL$ is injective and commutes with $\dd$, a metric Lie algebra $(\mg,g)$ is of Killing type if and only if $\Im(\dd)\cap\Im(\LL)=\Im(\LL\dd)$.

\section{Conformal Killing tensors on 2-step nilpotent Lie groups}

In this section we consider the case where the Riemannian manifold is a $2$-step nilpotent Lie group endowed with a left-invariant metric and we show that the corresponding metric Lie algebra is of Killing type.

Let $(N,g)$ be a Riemannian Lie group and let $\mn$ denote the Lie algebra of $N$. 
The center and the commutator of $\mn$ are, respectively,
\[\mz=\{z\in\mn\ |\   [x,z]=0, \,\mbox{for all }x\in\mn\},\qquad
\mn'=[\mn,\mn]:=\mathrm{span}\{ [x,y]\ |\   x,y\in\mn\}.
\]
The Lie algebra $\mn$ is said to be $2$-step nilpotent if it is non abelian and $\ad_x^2=0$ for all $x\in\mn$. 
Equivalently, $\mn$ is $2$-step nilpotent if $0\neq \mn'\subseteq \mz$.
 
For the rest of the section we assume that $\mn$ is $2$-step nilpotent and $N$ is a connected $2$-step nilpotent Lie group with Lie algebra $\mn$. We shall describe the main geometric properties of $(N,g)$ through linear objects in the metric Lie algebra $(\mn,g)$, following the work of Eberlein \cite{EB}.

Let $\mv$ be the orthogonal complement of $\mz$ in $\mn$ so that $\mn=\mv\oplus\mz$ as an orthogonal direct sum of vector spaces.  Each central element $z\in\mz$ defines an endomorphism $j(z):\mv\lra\mv$ by the equation
\begin{equation}\label{eq:jota}
\lela j(z)x,y\rira: =\lela z,[x,y]\rira \quad \mbox{ for all } x,y\in\mv.
\end{equation}
Each endomorphism $j(z)$ belongs to $\sso(\mv)$, the Lie algebra of skew-symmetric endomorphisms of $\mv$ with respect to $\bil$.

Many geometric features of the Riemannian manifold $(N,\bil)$ are captured by the linear map $j: \mz \lra \sso(\mv)$. In particular, by \eqref{eq.Koszul1} we readily obtain that the covariant derivative of left-invariant vector fields is expressed as follows:
\begin{equation}\label{eq:nabla}\left\{
\begin{array}{ll}
\nabla_x y=\frac12 \,[x,y] & \mbox{ if } x,y\in\mv,\\
\nabla_x z=\nabla_zx=-\frac12 j(z)x & \mbox{ if } x\in\mv,\,z\in\mz,\\
\nabla_z z'=0& \mbox{ if } z, z'\in\mz.
\end{array}\right.
\end{equation}
In particular, we have 
\begin{equation}\label{nablax} \nabla_x\mv\subset\mz,\qquad\nabla_x\mz\subset\mv,\qquad\forall x\in\mv,
\end{equation}
\begin{equation}\label{nablaz} \nabla_v\mv\subset\mv,\qquad\nabla_z\mz=0,\qquad\forall z\in\mz,
\end{equation}

The space of left-invariant symmetric Killing $p$-tensors on $(N,g)$ is identified with $\Sym^p\mn$. The decomposition $\mn=\mv\oplus\mz$  induces the following decomposition on the space of left-invariant symmetric tensors:
\begin{equation}
\Sym^p\mn=\bigoplus_{l=0}^p \Sym^l\mv\cdot \Sym^{p-l}\mz.
\end{equation}
Given $K\in \Sym^p\mn$, we denote $K^{(l)}$ its  component in $\Sym^l\mv\cdot \Sym^{p-l}\mz$ with respect to this decomposition and we call it as the component of $K$ of $\mv$-degree $l$.

For any symmetric $p$-tensor $K\in \Sym^l\mv\cdot \Sym^{p-l}\mz$, we  denote  $\deg_\mv(K)= lK$ and $\deg_\mz(K)= (p-l)K$, so that $\deg(K)=\deg_\mv(K)+\deg_\mz(K)$.

Using \eqref{nablax}, \eqref{nablaz} it is easy to prove that $\dd(\mv)\subset \mv\cdot\mz$ while $\dd(\mz)=0$. Recall that  $\dd$ is a derivation of $\Sym^p\mn$, so for every $p>0$ and $q\geq0$ the following inclusions hold:
\begin{equation}\label{eq:dd}
\dd ( \Sym^{q}\mz)=0 \qquad \mbox{ and } \qquad\dd (\Sym^p\mv\cdot \Sym^{q}\mz)\subset \Sym^p\mv\cdot \Sym^{q+1}\mz,
\end{equation}

As noticed above, for unimodular Lie algebras, in particular for nilpotent ones, the operator $\delta:\Sym^p\mn\lra \Sym^{p-1}\mn$ defined in \eqref{eq:ddelta} is the adjoint of $\dd$. Therefore, \eqref{eq:dd} implies:
\begin{equation}\label{eq:deltavz}
\delta (\Sym^{q}\mz)=\delta (\Sym^{q}\mv)=0 \; \mbox{ and } \;\delta (\Sym^p\mv\cdot \Sym^{q}\mz)\subset \Sym^p\mv\cdot \Sym^{q-1}\mz.
\end{equation}

Let $n_{\mv}$ and $n_{\mz}$ denote the dimensions of $\mv$  and $\mz$ respectively, so  that $n:=n_{\mv}+n_{\mz}$ is the dimension of $\mn$. Let $\{e_1, \ldots, e_{n_\mv}\}$ and $\{z_1, \ldots, z_{n_\mz}\}$ be orthonormal bases of $\mv$ and $\mz$, respectively. 
We define the following operators on $\Sym^p\mn$: for  every  $K\in \Sym^p\mn$,
\[\LL_\mv K:=\sum_{i=1}^{n_\mv} e_i\cdot e_i\cdot K,\qquad\LL_\mz K:=\sum_{i=1}^{n_\mz} z_i\cdot z_i\cdot K\] and 
\[\Lambda_\mv K:=\sum_{i=1}^{n_\mv} e_i\lrcorner e_i\lrcorner K, \qquad\Lambda_\mz K:=\sum_{i=1}^{n_\mz} z_i\lrcorner z_i\lrcorner K.\]
It is easy to check that these operators give a decomposition of 
 $\LL$ and $\Lambda$ as $\LL=\LL_\mv+\LL_\mz$ and $\Lambda=\Lambda_\mv+\Lambda_\mz$, and the following equalities hold
\begin{equation}\label{eq:LDvz}
[\Lambda_\mv,\LL_\mv]= 2n_\mv \Id +4 \deg_\mv, \quad [\Lambda_\mz,\LL_\mz]= 2n_\mz \Id +4 \deg_\mz,\;\mbox{ and }\;[\Lambda_\mz,\LL_\mv]=[\Lambda_\mv,\LL_\mz]=0.
\end{equation}

Recall that the left-invariant vector fields induced by elements in $\mz$ define Killing vector fields of $(N,g)$. So for every $i=1, \ldots, n_\mz$, $\dd z_i=0$ and hence
\[\dd (\LL_\mz K)=\sum_{i=1}^{n_\mz}\dd( z_i\cdot z_i\cdot K)= \sum_{i=1}^{n_\mz} z_i\cdot z_i\cdot\dd K=\LL_\mz\dd K.\] Moreover, $[\dd,\LL]=0$ (see \eqref{eq:LD}), therefore
\begin{equation}\label{eq:dLvz}
[\dd,\LL_\mz]=0=[\dd,\LL_\mv].
\end{equation}

Fix $l\in \Z_{\geq 0}$ and consider the real sequence $(b_r)_{r\geq 0}$ defined by
\[b_0=0, \qquad b_{r+1}=b_{r}-2n_\mv-4(l+2r), \mbox{ for } r\geq 0.\]
It is clear that $b_r<0$ for every $r\geq 1$.

\begin{pro}\label{pro:Hr}Let $K_0 \in \Sym^p\mn$ be a trace-free conformal Killing tensor. Then  for every $l=0, \ldots, p$  and $r\geq 0$, one has
\begin{equation}\label{eq:Hr}
\dd \Lambda^r_\mz K_0^{(l)}=a_0\left(
\LL_\mz\delta \Lambda^r_\mz K_0^{(l)}+\LL_\mv\delta \Lambda^r_\mz K_0^{(l-2)}+b_r \delta \Lambda^{r-1}_\mz K_0^{(l)}
\right),
\end{equation}
where $a_0=-\frac{1}{n+2p-2}$.
\end{pro}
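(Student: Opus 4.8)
The plan is to prove the identity \eqref{eq:Hr} by induction on $r\geq 0$, for each fixed $l$. The base case $r=0$ should follow from the characterization of trace-free conformal Killing tensors in Proposition \ref{K0cK}, namely $\dd K_0=a_0\LL\delta K_0$, projected onto the appropriate $\mv$-degree. Indeed, writing $\LL=\LL_\mv+\LL_\mz$ and decomposing $K_0=\sum_l K_0^{(l)}$, I would first record how $\dd$, $\delta$, $\LL_\mv$, $\LL_\mz$ shift the $\mv$-degree: by \eqref{eq:dd}--\eqref{eq:deltavz}, neither $\dd$ nor $\delta$ changes the $\mv$-degree, while $\LL_\mv$ raises it by $2$, $\LL_\mz$ preserves it, and $\Lambda_\mz$ preserves it as well. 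Matching the components of $\mv$-degree $l$ on both sides of $\dd K_0=a_0(\LL_\mv+\LL_\mz)\delta K_0$ should then yield
\[
\dd K_0^{(l)}=a_0\left(\LL_\mz\delta K_0^{(l)}+\LL_\mv\delta K_0^{(l-2)}\right),
\]
which is precisely \eqref{eq:Hr} at $r=0$, since $b_0=0$ kills the last term and $\Lambda_\mz^0=\Id$. The content of the base case is thus just bookkeeping of degrees, and I would present it as such.

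For the inductive step, I would assume \eqref{eq:Hr} holds at level $r$ and apply $\Lambda_\mz$ to both sides, then use the commutation relations to migrate $\Lambda_\mz$ past $\dd$, $\delta$, $\LL_\mv$ and $\LL_\mz$ until it is absorbed into an increase of the power of $\Lambda_\mz$ on each term. The key commutators I expect to need are: $[\Lambda_\mz,\dd]$ (which, paralleling $[\Lambda,\dd]=-2\delta$ from \eqref{eq:LD} but restricted to the central directions, should produce a $\delta$-type term), $[\Lambda_\mz,\delta]=0$ (the central analogue of $[\Lambda,\delta]=0$), $[\Lambda_\mz,\LL_\mz]=2n_\mz\Id+4\deg_\mz$ and $[\Lambda_\mz,\LL_\mv]=0$ from \eqref{eq:LDvz}. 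The term that generates the recursion is the commutator of $\Lambda_\mz$ with $\LL_\mz$: applying $\Lambda_\mz$ to $\LL_\mz\delta\Lambda_\mz^r K_0^{(l)}$ gives $\LL_\mz\delta\Lambda_\mz^{r+1}K_0^{(l)}$ plus a commutator term proportional to $\delta\Lambda_\mz^{r}K_0^{(l)}$, and it is exactly the accumulated coefficient of this lower-order term that should match the recursion $b_{r+1}=b_r-2n_\mv-4(l+2r)$.

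The main obstacle I anticipate is the precise evaluation of $[\Lambda_\mz,\dd]$ and the careful tracking of the scalar $\deg_\mz$ (equivalently $\deg_\mv$) eigenvalues as $\Lambda_\mz$ is pushed through. On $\Lambda_\mz^r K_0^{(l)}$ the $\mv$-degree is still $l$, but the $\mz$-degree drops by $2$ with each application of $\Lambda_\mz$, so the eigenvalue of $\deg_\mz$ appearing in $[\Lambda_\mz,\LL_\mz]=2n_\mz\Id+4\deg_\mz$ changes at each step; I must confirm that combining the $\deg_\mz$ contribution with the $-2n_\mv$ and the $-2\delta$-type contribution from the $\dd$ commutator yields exactly $-2n_\mv-4(l+2r)$ rather than an expression involving $n_\mz$. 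I expect the dependence on $n_\mz$ and on the total degree $p$ to cancel using the relation $p=l+(p-l)$ together with how $a_0$ and the degree operators interlock, and verifying this cancellation is the delicate computational heart of the argument. The commutativity $[\dd,\LL_\mv]=[\dd,\LL_\mz]=0$ from \eqref{eq:dLvz} will be used repeatedly to move $\dd$ freely, which simplifies the manipulation considerably.
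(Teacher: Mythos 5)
Your plan is correct and matches the paper's proof: the base case is exactly the $\mv$-degree-$l$ projection of Proposition \ref{K0cK}, the induction proceeds by contracting with the metric and pushing the contraction through via the commutators, and the coefficient bookkeeping you flag as delicate works out precisely as you anticipate, since $a_0\bigl(2n_\mz+4(p-l-2r-1)\bigr)+2+a_0b_r=a_0\bigl(b_r-2n_\mv-4(l+2r)\bigr)=a_0b_{r+1}$ once one writes $2=-2(n+2p-2)a_0$, which is where the $n_\mz$- and $p$-dependence cancels. The only cosmetic difference is that the paper applies the full $\Lambda$ and then projects onto $\mv$-degree $l$, whereas you apply $\Lambda_\mz$ directly; the refined commutators you would need, $[\Lambda_\mz,\dd]=-2\delta$ and $[\Lambda_\mz,\delta]=0$, do hold, being the bidegree-preserving components of \eqref{eq:LD} (the complementary pieces $[\Lambda_\mv,\dd]$ and $[\Lambda_\mv,\delta]$ vanish because they shift the $(\mv,\mz)$-bidegree differently from $\delta$ and from $0$), so the two routes are the same computation.
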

\begin{proof}
We make the proof for each $l\in \{0, \ldots, p\}$ fixed, by induction on $r$. Since $K_0$ is conformal Killing, from Proposition \ref{K0cK} we have that $\dd K_0=a_0\LL \delta K_0$. Projecting this equality to $\Sym^{l}\mv\cdot \Sym^{p+1-l}\mz$, and using \eqref{eq:dd} and \eqref{eq:deltavz}, we obtain
\begin{equation}\label{dkl}
\dd K_0^{(l)}=a_0\LL_\mz\delta K_0^{(l)}+a_0\LL_\mv \delta K_0^{(l-2)}, \qquad l= 0, \ldots, p.
\end{equation}
Therefore \eqref{eq:Hr} holds for $r=0$ since $b_0=0$.

Now assume that \eqref{eq:Hr} holds for some $r\geq 0$ and apply $\Lambda$ to this equation. Using \eqref{eq:LD}, \eqref{eq:LL} and  \eqref{eq:LDvz}, we obtain
\begin{eqnarray*}
\dd\Lambda \Lambda^r_\mz K_0^{(l)}-2\delta \Lambda^r_\mz  K_0^{(l)}&=&a_0\left[(\LL_\mz\Lambda_\mv+\LL_\mz\Lambda_\mz+(2n_\mz \Id +4 \deg_\mz))\delta \Lambda^r_\mz  K_0^{(l)}\right.\\
&&\hspace{-3mm}\left.+(\LL_\mv\Lambda_\mz+\LL_\mv\Lambda_\mv+(2n_\mv \Id +4 \deg_\mv)) \delta\Lambda^r_\mz  K_0^{(l-2)}+b_r\delta \Lambda\Lambda^{r-1}_\mz K_0^{(l)}\right]\\
&=&a_0\left[\LL_\mz\delta\Lambda \Lambda^r_\mz  K_0^{(l)}
+(2n_\mz +4 (p-l-2r-1))\delta \Lambda^r_\mz  K_0^{(l)}\right.\\
&&\hspace{-3mm}\left.+\LL_\mv \delta\Lambda \Lambda^r_\mz  K_0^{(l-2)}
+(2n_\mv  +4 (l-2)) \delta \Lambda^r_\mz K_0^{(l-2)}+b_r\delta \Lambda\Lambda^{r-1}_\mz K_0^{(l)}\right].
\end{eqnarray*}
This is an equality between symmetric tensors, each of which is a sum of tensors of $\mv$-degrees $l$ and $l-2$.  Taking the projection of this equality onto $\Sym^l\mv\cdot \Sym^{p+1-l}$ we get
\begin{eqnarray*}
\dd\Lambda^{r+1}_\mz K_0^{(l)}-2\delta \Lambda^r_\mz K_0^{(l)}&=&a_0\left[\LL_\mz\delta\Lambda^{r+1}_\mz K_0^{(l)}+\LL_\mv \delta\Lambda_\mz^{r+1} K_0^{(l-2)})\right.\\
&&\left.\qquad
+(2n_\mz +4 (p-l-2r-1))\delta  \Lambda^r_\mz K_0^{(l)}+b_r\delta \Lambda^{r}_\mz K_0^{(l)}\right],
\end{eqnarray*}
and,  since $a_0=-\frac1{n+2p-2}$, this is equivalent to
\begin{eqnarray*}
\dd\Lambda^{r+1}_\mz K_0^{(l)}
&=&a_0\left[\LL_\mz\delta\Lambda^{r+1}_\mz K_0^{(l)}+\LL_\mv \delta\Lambda^{r+1}_\mz K_0^{(l-2)})
+(b_r-2n_\mv -4 (l+2r))\delta\Lambda^r_\mz  K_0^{(l)}\right].
\end{eqnarray*}
This proves that \eqref{eq:Hr} holds for $r+1$.
\end{proof}

\begin{cor}\label{cor:32}If $K_0$ is a trace-free conformal Killing $p$-tensor, then $\delta K_0\in \Im(\dd)$.
\end{cor}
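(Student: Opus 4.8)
The plan is to establish the slightly more uniform statement that $\delta \Lambda_\mz^r K_0^{(l)} \in \Im(\dd)$ for every $\mv$-degree $l \in \{0,\dots,p\}$ and every $r \geq 0$, and then read off the conclusion from the case $r = 0$: summing over $l$ gives $\delta K_0 = \sum_{l=0}^p \delta K_0^{(l)} \in \Im(\dd)$. The engine is Proposition \ref{pro:Hr}. I would read the identity \eqref{eq:Hr} as a recursion linking the three divergence tensors $\delta\Lambda_\mz^r K_0^{(l)}$, $\delta\Lambda_\mz^r K_0^{(l-2)}$ and $\delta\Lambda_\mz^{r-1} K_0^{(l)}$, and solve it for the last of these. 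This is legitimate precisely when $r \geq 1$, since then $b_r < 0$ and hence $a_0 b_r \neq 0$; it yields
\[
\delta \Lambda_\mz^{r-1} K_0^{(l)} = \frac{1}{a_0 b_r}\,\dd\big(\Lambda_\mz^r K_0^{(l)}\big) - \frac{1}{b_r}\,\LL_\mz\,\delta\Lambda_\mz^r K_0^{(l)} - \frac{1}{b_r}\,\LL_\mv\,\delta\Lambda_\mz^r K_0^{(l-2)},
\]
expressing a level-$(r-1)$ divergence through level-$r$ divergences plus an explicit element of $\Im(\dd)$.

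This structure calls for a \emph{downward} induction on $r$. For the base case I would use that $K_0 \in \Sym^p\mn$ forces $\Lambda_\mz^r K_0^{(l)} \in \Sym^l\mv\cdot\Sym^{p-l-2r}\mz$ to vanish as soon as $2r > p-l$; thus for any fixed $r_0 > p/2$ all the tensors $\delta\Lambda_\mz^{r_0} K_0^{(l)}$ are zero and so trivially lie in $\Im(\dd)$. For the inductive step, assuming $\delta\Lambda_\mz^r K_0^{(l')} \in \Im(\dd)$ for all $l'$ at some level $r \geq 1$, the displayed formula presents $\delta\Lambda_\mz^{r-1} K_0^{(l)}$ as a combination of $\dd(\Lambda_\mz^r K_0^{(l)})$, which is manifestly in $\Im(\dd)$, together with $\LL_\mz$ and $\LL_\mv$ applied to elements of $\Im(\dd)$ (with the convention $K_0^{(l-2)} = 0$ when $l < 2$).

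The point that makes the induction close — and the only genuine subtlety I anticipate — is that $\LL_\mz$ and $\LL_\mv$ both commute with $\dd$ by \eqref{eq:dLvz}, so they preserve $\Im(\dd)$: if $\delta\Lambda_\mz^r K_0^{(l)} = \dd S$, then $\LL_\mz\,\delta\Lambda_\mz^r K_0^{(l)} = \dd(\LL_\mz S) \in \Im(\dd)$, and likewise for $\LL_\mv$. Hence all three terms on the right lie in $\Im(\dd)$, giving $\delta\Lambda_\mz^{r-1} K_0^{(l)} \in \Im(\dd)$ and completing the step. Running the induction from $r_0$ down to $r = 0$ and summing over $l$ then delivers $\delta K_0 \in \Im(\dd)$. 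The two ingredients I would keep an eye on are the non-vanishing of $b_r$ for $r \geq 1$, which lets us invert \eqref{eq:Hr} — note it fails at $r = 0$ since $b_0 = 0$, which is exactly why the recursion must be run downward from the top rather than built up from the bottom — and the commutation relations \eqref{eq:dLvz} that propagate membership in $\Im(\dd)$ across levels.
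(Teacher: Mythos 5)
Your argument is correct and is essentially the paper's own proof: the paper runs the same induction with the substitution $s=t-r$ (climbing $s$ from $0$ to $t\ge p/2$ rather than descending in $r$), starting from the vanishing of $\delta\Lambda_\mz^{t}K_0^{(l)}$ via \eqref{eq:deltavz}, solving \eqref{eq:Hr} for $\delta\Lambda_\mz^{r-1}K_0^{(l)}$ using $a_0b_r\neq 0$, and propagating membership in $\Im(\dd)$ through $[\dd,\LL_\mv]=[\dd,\LL_\mz]=0$. The only cosmetic difference is that the paper dispatches $p\le 2$ separately via Propositions \ref{K0cK} and \ref{pro:degleq2}, whereas your induction handles all degrees uniformly.
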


\begin{proof} The result holds trivially for $p=0$ and, by Propositions \ref{K0cK} and \ref{pro:degleq2}, it also holds for $p=1,2$, so we assume $p\geq 3$. It is enough to show that $\delta K_0^{(l)}\in \Im(\dd)$ for each $l=0, \ldots, p$. 

Fix  a natural number $t$ such that $t\geq p/2$. We claim that for every  $s=0, \ldots, t$, 
\begin{equation}\label{eq:dts}
\delta\Lambda_\mz^{t-s}K_0^{(l)}\in \Im(\dd), \quad\mbox{ for every }l=0, \ldots, p.
\end{equation} 
Once this claim is  proved, and taking $s=t$, we will obtain $\delta K_0^{(l)}\in \Im(\dd)$ as we wanted to show.

Notice that for every $l=0, \ldots, p$, the inequality $p-l-2t\leq0$ holds. Hence $\Lambda^t_\mz K_0^{(l)}\in \Sym^l\mv$ and thus $\delta \Lambda^t_\mz K_0^{(l)}=0$ for every $l=0, \ldots, p$ by \eqref{eq:deltavz}. Therefore, \eqref{eq:dts} holds for $s=0$.

Suppose that  \eqref{eq:dts} holds for some $0\leq s<t$ and, for each $l=0, \ldots, p$,  let $B_{t-s}^{(l)}$ be a symmetric tensor such that $\delta\Lambda_\mz^{t-s}K_0^{(l)}=\dd B_{t-s}^{(l)}$. Using  \eqref{eq:Hr} for $r=t-s$ and the commutation of $\dd$ with $\LL_\mz$ and $\LL_\mv$, we obtain
\begin{eqnarray*}
a_0b_{t-s} \delta \Lambda^{{t-(s+1)}}_\mz K_0^{(l)}&=&-\dd \Lambda^{t-s}_\mz K_0^{(l)}+a_0\left(
\LL_\mz\delta \Lambda^{t-s}_\mz K_0^{(l)}+\LL_\mv\delta \Lambda^{t-s}_\mz K_0^{(l-2)}\right)\\
&=&-\dd \Lambda^{t-s}_\mz K_0^{(l)}+a_0\,\dd\left(
\LL_\mz B_{t-s}^{(l)}+\LL_\mv B_{t-s}^{(l-2)}\right).
\end{eqnarray*}
Since $a_0b_{t-s}\neq 0$ we obtain $\delta \Lambda^{{t-(s+1)}}_\mz K_0^{(l)}\in \Im(\dd)$.
\end{proof}

By Proposition \ref{pro:extK0}, Definition \ref{def:kt} and Corollary \ref{cor:32} we obtain at once:

\begin{teo} \label{teo:2nilp} Every $2$-step nilpotent metric Lie algebra is of Killing type.
\end{teo}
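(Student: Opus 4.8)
Every $2$-step nilpotent metric Lie algebra is of Killing type.

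The plan is to reduce the theorem to the two immediately preceding results and then invoke them directly. By Definition \ref{def:kt}, I must show that every conformal Killing tensor $K$ on $(\mn,g)$ is of Killing type. The equivalence between \eqref{it1} and \eqref{it2} in Proposition \ref{pro:extK0} says that $K$ is of Killing type if and only if its trace-free part $K_0$ is of Killing type, so it suffices to treat trace-free conformal Killing tensors. Hence the whole problem collapses to showing that every trace-free conformal Killing $p$-tensor $K_0$ on a $2$-step nilpotent metric Lie algebra is of Killing type.

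The second reduction uses the equivalence between \eqref{it2} and \eqref{it4} in Proposition \ref{pro:extK0}: a trace-free conformal Killing tensor $K_0$ is of Killing type precisely when $\delta K_0 \in \Im(\dd)$. But this is exactly the conclusion of Corollary \ref{cor:32}, which asserts that $\delta K_0 \in \Im(\dd)$ for \emph{any} trace-free conformal Killing $p$-tensor on a $2$-step nilpotent metric Lie algebra. So the theorem follows by simply chaining these facts: given any conformal Killing $K$, pass to $K_0$; apply Corollary \ref{cor:32} to get $\delta K_0\in\Im(\dd)$; conclude via Proposition \ref{pro:extK0} that $K_0$, and therefore $K$, is of Killing type.

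Since all the genuine analytic content has been front-loaded into Corollary \ref{cor:32} (and its engine, the inductive identity \eqref{eq:Hr} of Proposition \ref{pro:Hr}), the proof of the theorem itself is a one-line bookkeeping argument and there is essentially no obstacle left at this stage. If I were instead planning the argument \emph{before} Proposition \ref{pro:Hr} and Corollary \ref{cor:32} were available, the hard part would be proving that $\delta K_0\in\Im(\dd)$: the natural strategy is to exploit the orthogonal splitting $\mn=\mv\oplus\mz$ together with the behavior of $\dd$, $\delta$, $\LL_\mv$, $\LL_\mz$, $\Lambda_\mv$, $\Lambda_\mz$ recorded in \eqref{eq:dd}, \eqref{eq:deltavz}, \eqref{eq:LDvz} and \eqref{eq:dLvz}, and to run a descending induction on the number of $\Lambda_\mz$-contractions applied to each $\mv$-degree component $K_0^{(l)}$. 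The base case is clear because enough contractions with $\mz$ land in $\Sym^l\mv$, where $\delta$ vanishes, and the inductive step rewrites $\delta\Lambda_\mz^{r-1}K_0^{(l)}$ in terms of $\dd$-images using \eqref{eq:Hr}, the key point being that the coefficient $a_0 b_r$ is nonzero so one can solve for the lower-order term. But given the results already established in the excerpt, I would simply write:

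\begin{proof}
Let $K$ be a conformal Killing tensor on the $2$-step nilpotent metric Lie algebra $(\mn,g)$, and let $K_0$ be its trace-free part. By Corollary \ref{cor:32}, $\delta K_0\in\Im(\dd)$. Hence, by the equivalence between \eqref{it4} and \eqref{it2} in Proposition \ref{pro:extK0}, $K_0$ is of Killing type, and therefore so is $K$ by the equivalence between \eqref{it2} and \eqref{it1}. As $K$ was an arbitrary conformal Killing tensor, $(\mn,g)$ is of Killing type in the sense of Definition \ref{def:kt}.
\end{proof}
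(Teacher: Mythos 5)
Your proof is correct and matches the paper's own argument exactly: the paper likewise derives Theorem \ref{teo:2nilp} immediately from Corollary \ref{cor:32} combined with the equivalences of Proposition \ref{pro:extK0} and Definition \ref{def:kt}. Nothing is missing.
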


\section{Conformal Killing tensors on extensions of Lie algebras} \label{sec:centext}

In this section, we view symmetric Killing tensors as polynomials in a given orthonormal basis of the metric Lie algebra. We will decompose them with respect to a particular variable, corresponding to a specific direction inside the metric Lie algebra, assuming that the symmetric differential of vectors in this direction are polynomial in the remaining variables.

Let $(\mg,g)$ be a metric Lie algebra and let $t\in \mg$ be a unit vector. Denote by $E:= \left\langle t\right\rangle^\bot$ and let
\[\LL_E:=\sum e_i^2\in\Sym^2E\subset\Sym^2\mg,\]
where $\{e_i\}$ is  an orthonormal basis of $E$. Notice that $E$ is, in general, not a subalgebra of $\mg$.

The space of symmetric 2-tensors $\Sym^2\mg$ splits as orthogonal direct sum
\[
\Sym^2\mg=\Sym^2E\oplus t\cdot \mg,
\] and it is clear that $\LL_E\in \Sym^2E$ and $\LL=\LL_E+ t^2$. As before, we will also view $\LL_E$ as an operator on $\Sym^*\mg$ by $\LL_E(K):=\LL_E\cdot K$.

Using the fact that $\dd$ and $\LL$ commute, we have
\[
\dd\LL_E=\dd(\LL-t^2)=\LL \dd-2t\cdot \dd t-t^2\cdot \dd=\LL_E \dd-2t\cdot \dd t,
\]
so $[\dd,\LL_E]=-2t\cdot  \dd t$. By immediate induction, for any $k\geq 0$ we have: 
\begin{equation}
\label{eq:dLE}
[\dd,\LL_E^k]=-2k\, t\cdot \dd t \cdot \LL_E^{k-1}.
\end{equation}

Let $\{e_1, \ldots, e_n\}$ be an orthonormal basis of $E$. A symmetric $p$-tensor on $\mg$ can be viewed either as a polynomial in the variables $t,e_i$, $i=1, \ldots,n$ with real coefficients, or as a polynomial in the variable $t$ with coefficients in the ring of polynomials in $e_i$, $i=1, \ldots, n$. That is,
$\Sym^*\mg=\R[e_1, \ldots, e_n,t]=\R[e_1, \ldots, e_n][t]$.

We will now prove a technical result, which will have several useful applications later.
\begin{pro}\label{lm:lemme}Suppose that in $(\mg,g)$ there exists a unit vector $t\in \mg$ such that for $E:=\lela t\rira^\bot$ we have
$\ad_t(E)\subseteq E$ and for every $x,y,z\in E$, 
\begin{equation}\label{eq:xyz}
g(\ad_xy,z)+g(y,\ad_xz)=0.
\end{equation}
Then the metric Lie algebra $(\mg,g)$ is of Killing type. In addition, if $\dd t\neq 0$ then any Killing tensor on $(\mg,g)$ is of even degree in $t$.
\end{pro}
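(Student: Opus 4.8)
The plan is to exploit the decomposition $\Sym^*\mg=\R[e_1,\dots,e_n][t]$ coming from the splitting $\mg=\langle t\rangle\oplus E$, writing every symmetric tensor as a polynomial in the distinguished variable $t$ with coefficients in $\Sym^*E$. The hypothesis \eqref{eq:xyz} says that the metric on $\mg$ restricted to $E$ is $\ad_x$-invariant for $x\in E$; combined with $\ad_t(E)\subseteq E$, this should make the operator $\dd$ behave very rigidly with respect to the $t$-grading. Concretely, I would first compute $\dd t$ and $\dd e_i$ using \eqref{eq:symdiffx}, namely $\dd x=-(\ad_x+\ad_x^*)$. The condition $\ad_t(E)\subseteq E$ together with $t$ being a unit vector should force $\ad_t^*$ to preserve the splitting as well, so that $\dd t=-(\ad_t+\ad_t^*)$ lies in $\Sym^2 E$ (contains no $t$); and the ad-invariance \eqref{eq:xyz} should make the ``$E$-internal'' part of $\dd$ vanish, so that for $x\in E$ the tensor $\dd x$ has the form (something in $t\cdot E$) plus a controlled term. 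The key structural output I want is a clean description of how $\dd$ raises or preserves the degree in $t$.

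**Next I would** set up the criterion for Killing type established earlier: by the remark following Definition~\ref{def:kt}, $(\mg,g)$ is of Killing type if and only if $\Im(\dd)\cap\Im(\LL)=\Im(\LL\dd)$, equivalently (via Proposition~\ref{pro:extK0}) it suffices to show that for every trace-free conformal Killing tensor $K_0$ one has $\delta K_0\in\Im(\dd)$. The strategy is to run an induction on the degree in $t$, exactly parallel to the $\mv$/$\mz$ induction of Proposition~\ref{pro:Hr} and Corollary~\ref{cor:32}, but now with $t$ playing the role of the ``central'' direction and $\LL_E$, $\LL_{t}:=t^2$ playing the roles of $\LL_\mv$, $\LL_\mz$. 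The commutation relation \eqref{eq:dLE}, $[\dd,\LL_E^k]=-2k\,t\cdot\dd t\cdot\LL_E^{k-1}$, is the analogue of the relations \eqref{eq:dd}--\eqref{eq:dLvz} used there, and the ad-invariance on $E$ is what guarantees that $\dd$ does not mix the coefficients in an uncontrollable way. I would write a conformal Killing $K_0$ as $\sum_k t^k A_k$ with $A_k\in\Sym^*E$, feed it into the identity $\dd K_0=a_0\LL\,\delta K_0$ from Proposition~\ref{K0cK}, and project onto fixed powers of $t$ to obtain a recursion expressing $\delta A_k$ in terms of $\dd$ of explicitly constructed tensors, peeling off the top power of $t$ inductively until $\delta K_0\in\Im(\dd)$ follows.

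**For the second assertion**, suppose $\dd t\neq0$ and let $K$ be a Killing tensor, so $\dd K=0$. Writing $K=\sum_k t^k A_k$, I expect the condition $\dd K=0$ to decouple into equations on the homogeneous-in-$t$ pieces, and the presence of the nonzero term $-2t\cdot\dd t\cdot(\cdots)$ coming from $[\dd,\LL_E]$ (more precisely, from differentiating the $t^k$ factors, since $\dd(t^k\,A)=k\,t^{k-1}\dd t\cdot A+t^k\,\dd A$) to link adjacent parities. The idea is that $\dd$ changes the parity of the $t$-degree by an odd amount on the $t^{k-1}\dd t$ term while preserving it on the $t^k\dd A$ term; balancing these forces the odd-degree coefficients to vanish when $\dd t\neq0$. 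I would extract a vanishing statement for the top odd coefficient and propagate it downward.

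**The hard part will be** pinning down exactly how $\dd$ acts on a general product $t^k\cdot A$ with $A\in\Sym^*E$, i.e.\ controlling the ``$E$-internal'' component of $\dd A$ and the cross terms between $\dd t$ and $A$. Specifically, I need \eqref{eq:xyz} to guarantee that when I expand $\dd(t^kA)$ using that $\dd$ is a derivation, the piece $\dd A$ stays inside $\Sym^*E$ modulo a single $t\cdot(\cdots)$ correction governed by $\dd t$, so that the $t$-degree bookkeeping closes up into a clean recursion. Verifying this normal form is where the ad-invariance hypothesis does all the work, and it is the step most likely to hide subtleties (for instance, whether $\ad_x^*$ for $x\in E$ genuinely preserves $E$, which requires combining $\ad_t(E)\subseteq E$ with \eqref{eq:xyz}). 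Once that normal form is in hand, both conclusions follow by the same inductive descent on the $t$-degree as in Corollary~\ref{cor:32}.
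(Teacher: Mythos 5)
Your opening step (showing $\dd t\in\Sym^2E$ and $\dd x\in t\cdot E$ for $x\in E$, i.e.\ $\dd x=t\cdot Mx$ for some $M\in\End(E)$) matches the paper and is correct. But the rest of the plan misses the actual crux. After writing $K=(\LL_E+t^2)\cdot P+t\cdot T+C$ and comparing $t$-degrees, the conformal Killing condition reduces to the system $\dd T=t\cdot\tilde B$, $\dd C=0$, $\dd t\cdot T=\LL_E\cdot\tilde B$ with $\tilde B:=B-\dd P\in\R[e_1,\dots,e_n]$, and one must prove $\tilde B=0$. This is where all the work is, and your proposal does not address it: the paper factors $T=\LL_E^k\cdot T_0$ with $\LL_E\nmid T_0$, derives $\LL_E\cdot M(T_0)=-2(2k+1)S_M\cdot T_0$, uses unique factorization in $\R[e_1,\dots,e_n]$ to conclude $\LL_E$ divides $S_M$ (so the symmetric part of $M$ is a multiple of $\Id$), and then uses that the skew-symmetric part of $M$, acting as a derivation of $\Sym^*E$, has no nonzero real eigenvalues to force $M(T_0)=0$ and hence $\tilde B=0$. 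You flag the ``normal form'' for $\dd(t^k\cdot A)$ as the hard part, but that is the easy part (it is just the derivation property plus $\dd x=t\cdot Mx$); the divisibility-plus-skew-symmetry argument is the missing idea.

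Two further points. First, the recursion you want to import from Proposition \ref{pro:Hr}/Corollary \ref{cor:32} does not transfer: there $\dd$ preserves $\mv$-degree and strictly raises $\mz$-degree, and $\delta$ is the adjoint of $\dd$ because nilpotent algebras are unimodular; here $\dd$ shifts the $t$-degree by $\pm1$ (down via $\dd t\in\Sym^2E$, up via $\dd x=t\cdot Mx$) and $\mg$ need not be unimodular, so neither the grading bookkeeping nor the $\delta$-identities carry over. The paper avoids $\delta$ entirely and works with the criterion $B\in\Im(\dd)$ from Proposition \ref{pro:extK0}. Second, for the parity claim: collecting fixed powers of $t$ in $\dd K=0$ only links coefficients of the same parity, so your argument shows that the odd-in-$t$ part of a Killing tensor is itself Killing, not that it vanishes. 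The paper instead deduces $T=0$ from $\dd t\cdot T=\LL_E\cdot\tilde B=0$ (using $\tilde B=0$ from the first part and $\dd t\neq0$ in the integral domain $\R[e_1,\dots,e_n]$), and then runs an induction on the total degree via $K=\LL\cdot P+C$.
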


\begin{proof}
We first remark that the hypotheses imply that $\dd x\in t\cdot E$ for every $x\in E$. Indeed, given $x,y,z\in E$, by formula
\eqref{eq:symdiffx} we have
\[
\dd x(y,z)=-g(\ad_xy,z)-g(y,\ad_x z),
\]which is zero because of \eqref{eq:xyz}. Moreover, $\dd x(t,t)=-2g(\ad_xt,t)=2g(\ad_tx,t)=0$ since $\ad_t x\in E=\lela t\rira^\bot$. Therefore, $\dd x\in t\cdot E$ and thus there exists $M\in \End(E)$ such that \begin{equation}
\label{eq:dxtM}
\dd x=t\cdot Mx, \qquad \mbox{ for every } x\in E.
\end{equation}

We shall prove in addition, that $\dd t$ is a multiple of the symmetric 2-tensor $S_M$ corresponding to the symmetric part of $M$. Indeed, for $x,y\in E$, \eqref{eq:symdiffx} and \eqref{eq:dxtM} give
\[
g(Mx,y)=y\lrcorner t\lrcorner \dd x=-y\lrcorner t\lrcorner(\ad_x+\ad_x^*)=-g(\ad_xt,y)-g(t,\ad_xy).
\]Now using this equation and \eqref{eq:symdiffx} we obtain
\[g((M+M^*)x,y)=g(Mx,y)+g(x,My)=-g(\ad_xt,y)-g(\ad_yt,x)=-\dd t(x,y).\]
Therefore, 
\begin{equation}
\label{eq:diffT}
\dd t =-2S_M.
\end{equation}

Let $K$ be a symmetric tensor on $\mg$ such that $\dd K=\LL B$, for some symmetric tensor $B$ in $\mg$. In order to prove the first statement, we need to show that $B\in \Im(\dd)$.

We apply the division algorithm in $\R[e_1, \ldots, e_n][t]$ to write
\[
K=(\LL_E+t^2)\cdot P(t)+t\cdot T+C\]
where $P\in \R[e_1, \ldots, e_n][t]$ and $T,C\in \R[e_1, \ldots, e_n]$.
The conformal Killing condition for $K$ gives $\dd K=\LL \dd P+\dd(t\cdot T+C)=\LL B$, which implies $\dd (t\cdot T+C)=\LL \tilde B$, with $\tilde B:=B-\dd P$. We claim that, under the hypotheses above,  $\tilde B=0$ and thus $B\in \Im(\dd)$.

To prove this claim, notice that $\dd (t\cdot T+C)=\LL \tilde B$ gives
\[ \dd t\cdot T+t\cdot\dd T+\dd C=(\LL_E+t^2)\cdot\tilde B.\]
Equations \eqref{eq:dxtM} and \eqref{eq:diffT} imply that $\dd t\cdot T$ has zero degree in $t$ and $\dd C$ (resp. $t\cdot \dd T$) either vanishes or has degree 1 (resp. 2) in $t$; consequently, the polynomial $(\LL_E+t^2)\cdot\tilde B$ has at most degree 2 in $t$, so $\tilde B$ has zero degree in $t$ and the following system holds:
\begin{eqnarray}
\dd T&=&t \cdot\tilde B \label{eq:tBdT}\\
\dd C&=&0\\
\dd t\cdot T&=&\LL_E\cdot \tilde  B.\label{eq:LBdtT}
\end{eqnarray}
Notice that \eqref{eq:tBdT} and \eqref{eq:LBdtT} together imply 
\begin{equation}\label{eq:LET}
\LL_E\cdot\dd T=t\cdot \dd t \cdot T.
\end{equation}
From \eqref{eq:LBdtT} and the fact that $\LL_E$ is injective, it is  clear that $\dd t\cdot T= 0$ implies our claim $\tilde B=0$. So let us assume $\dd t\cdot T\neq 0$.

Since $T\neq 0$, there is some $k\geq 0$ such that $T=\LL_E^k \cdot T_0$ with $T_0\neq 0$ not divisible by $\LL_E$. Then using the commutation formula \eqref{eq:dLE}, we get
\begin{equation}\label{eq:aux}
\dd T=\dd (\LL_E^k\cdot T_0)=\LL_E^k\cdot \dd T_0-2kt\cdot\dd  t\cdot \LL_E^{k-1} \cdot T_0=\LL_E^{k-1}\cdot (\LL_E\cdot\dd T_0-2kt\cdot\dd  t\cdot T_0).
\end{equation}
Notice that \eqref{eq:dxtM} implies that $\dd  T_0=t\cdot M(T_0)$, since $T_0$ is a symmetric tensor in $E$. This fact, together with \eqref{eq:aux} and \eqref{eq:diffT}, allow us to write \eqref{eq:LET} as
\begin{equation}
\label{eq:red}
\LL_E \cdot M(T_0)=-2(2k+1) S_M\cdot T_0.
\end{equation}
Since $\LL_E$ does not divide $T_0$, we obtain that $\LL_E$ divides $S_M$ in the polynomial ring $\R[e_1, \ldots, e_n]$, and thus $S_M=a\Id=\frac{a}2\LL_E$, for some $a\in \R$. Hence, by \eqref{eq:red}, we get \[
M(T_0)=-a(2k+1) \cdot T_0.\]

Let $A$ denote the skew-symmetric part of $M$ and $l:=\deg T_0$. Then $M(T_0)=alT_0+A(T_0)$ so the last equation reads
\[
A(T_0)=-a(2k+1+l) T_0.
\]
Since $A$ is skew-symmetric, $A(T_0)=aT_0=0$ which gives $M(T_0)=0$, and thus $\dd T_0=0$. Then, by \eqref{eq:aux} we obtain $\dd T=0$, which by \eqref{eq:tBdT} gives $\tilde B=0$, as claimed. 

For the second part, assume that  $K$ is a Killing $p$-tensor so that $B$ above is zero, that is, $\dd K=0$. The first  part of the proof implies that $\tilde B=-\dd P=0$, and thus $T=0$ by \eqref{eq:LBdtT} since, by assumption, $\dd t\neq 0$. Hence $K=\LL\cdot P(t) +C$ where $P$ is a Killing tensor in $\Sym^*\mg$ and $C$ is  a Killing tensor in $\Sym^*E\subset \Sym^*\mg$.

Now we proceed by induction on the degree $p$ of $K$. Let  $\xi=at+\sum_{i=1}^na_i e_i$, with $a,a_i\in \R$ for $i=1, \ldots,n$, be a Killing vector field. By \eqref{eq:dxtM} we have
\[
0=\dd \xi=a\dd t+\sum_{i=1}^n a_i t\cdot Me_i.
\]Since $0\neq\dd t\in \Sym^2E$, this implies that $a=0$ and thus $K$ has zero degree in $t$. 

Suppose that every Killing $p-2$-tensor in $\mg$ has even degree on $t$ and let $K$ be a Killing $p$-tensor. Then, $K=\LL\cdot P+C$ with $P$ a Killing $p-2$-tensor and $C$ a Killing tensor of zero degree in $t$. Therefore, $\LL\cdot P=(\LL_E+t^2)\cdot P$ has even degree in $t$ and the same holds for $K$.
\end{proof}

\begin{remark}
The hypotheses in Proposition \ref{lm:lemme} can be interpreted by saying that the decomposition $\mg=\langle t\rangle\oplus E$ satisfies all conditions for being naturally reductive, except for the $\ad_t$-invariance of the metric on $E$. 
\end{remark}

We shall apply the previous result to two particular cases of metric Lie algebras.

\begin{cor} \label{cor:abideal} Let $(\mg,g)$ be a metric Lie algebra with a codimension $1$ ideal $\mh$, and let $h$ denote the restriction of $g$ to $\mh$. If the metric Lie algebra $(\mh,h)$ is such that $h$ is ${\rm ad_\mh}$-invariant, then $(\mg,g)$ is of Killing type.
\end{cor}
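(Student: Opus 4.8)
The plan is to verify that the hypotheses of Proposition \ref{lm:lemme} hold for a suitable unit vector $t$, after which the conclusion that $(\mg,g)$ is of Killing type follows immediately. Since $\mh$ has codimension $1$ in $\mg$, its orthogonal complement $\mh^\bot$ with respect to $g$ is $1$-dimensional; I would choose $t$ to be a unit vector spanning $\mh^\bot$, so that $E:=\langle t\rangle^\bot=\mh$. With this choice, the two conditions required by Proposition \ref{lm:lemme} must be checked: first that $\ad_t(E)\subseteq E$, i.e. $\ad_t(\mh)\subseteq\mh$, and second that $g(\ad_x y,z)+g(y,\ad_x z)=0$ for all $x,y,z\in E=\mh$.

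The second condition is precisely the statement that $h=g|_{\mh}$ is $\ad_\mh$-invariant, which is the hypothesis of the corollary; since $\ad_x y=[x,y]\in\mh$ for $x,y\in\mh$ (as $\mh$ is a subalgebra), equation \eqref{eq:adinv} applied within $\mh$ gives exactly \eqref{eq:xyz}. The first condition is where the ideal hypothesis enters: because $\mh$ is an \emph{ideal} of $\mg$, we have $[\mg,\mh]\subseteq\mh$, and in particular $\ad_t(\mh)=[t,\mh]\subseteq\mh$, so $\ad_t(E)\subseteq E$ holds. Thus both hypotheses of Proposition \ref{lm:lemme} are satisfied.

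Having verified the hypotheses, I would invoke Proposition \ref{lm:lemme} directly to conclude that $(\mg,g)$ is of Killing type, which is exactly the assertion of the corollary. The argument is therefore a short verification rather than a new computation; the only subtlety worth stating explicitly is the identification $E=\mh$, which relies on $\mh$ having codimension one so that its orthogonal complement is spanned by a single unit vector $t$. I do not anticipate a genuine obstacle: the essential content has already been packaged into Proposition \ref{lm:lemme}, and the corollary amounts to recognizing that a codimension-one ideal carrying an $\ad_\mh$-invariant metric furnishes exactly the ``naturally reductive up to $\ad_t$-invariance of the metric on $E$'' structure described in the remark following that proposition.
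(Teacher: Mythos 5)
Your proposal is correct and follows exactly the paper's own argument: take $t$ to be a unit vector spanning $\mh^\bot$, set $E=\mh$, note that $\ad_t(\mh)\subseteq\mh$ because $\mh$ is an ideal and that \eqref{eq:xyz} is the $\ad_\mh$-invariance of $h$, then apply Proposition \ref{lm:lemme}. There is nothing to add.
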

\begin{proof}
We  shall apply Proposition \ref{lm:lemme} to a unit vector $t$ spanning $\mh^\bot$ and $E:=\mh$. Since $\mh$ is an ideal of $\mg$, the condition $\ad_t(\mh)\subseteq \mh$ is automatically satisfied. Moreover, since $h$ is {\rm ad}-invariant on $\mh$, \eqref{eq:xyz} is satisfied. So the result follows by a direct application of the aforementioned proposition.
\end{proof}

The previous result in particular implies that if a Lie algebra $\mg$ possesses a codimension one abelian ideal, then $(\mg,g)$ is of Killing type, for any choice of the metric $g$.

Besides this application to Lie algebras possessing codimension one ideals, Proposition \ref{lm:lemme} can also be applied to certain Lie algebras with non-trivial center.

Let $(\mg,g)$ be a metric Lie algebra and let $t$ be a unit vector in the center of $\mg$. Let $p:\mg\lra \mh:=\mg/\lela t \rira$ be the quotient map, and denote by $p(x)=:\bar x$ for $x\in\mg$. We endow $\mh$ with the unique Lie algebra structure $[\;,\,]_\mh$ making $p$ a Lie algebra morphism.
The bilinear map
\begin{equation}\label{eq:omega}
\omega:\mh\times \mh\lra \R, \qquad \omega(\bar x,\bar y):=g([ x, y],t), \qquad \forall\ \bar x,\bar y\in\mh,
\end{equation}
is a well defined closed 2-form in $\mh$. 

Let $E$ denote the orthogonal of $t$ in $\mg$ and consider the metric $h$ on $\mh$ such that $p|_E:E\lra \mh$ is an isometry. We consider the Lie algebra $ \R t\oplus_\omega \mh$, whose underlying vector space is $ \R t\oplus \mh$ and with Lie bracket verifying:
\begin{equation}
\label{eq:brom}
[x,y]=[x,y]_\mh+\omega(x,y)t, \quad [x,t]=0\qquad \mbox{ for all } x,y\in\mh.
\end{equation}
 Moreover, we consider in $ \R t\oplus_\omega \mh$ the metric $\tilde h$ extending $h$ such that $\mh$ is orthogonal to $t$ and $t$ has unit length.
Then the linear map defined by
\[
f:\mg\lra \R t\oplus \mh,\qquad f(x)=\bar x +g(x,t)t,
\] is a Lie algebra isomorphism and an isometry from $(\mg,g)$ to $(\R t\oplus \mh,\tilde h)$.

In these notations we have:
\begin{cor}\label{cor:centext}
Let $(\mg,g)$ be a metric  Lie algebra with non-trivial center and assume there is a central vector of unit length $t$ such that the metric induced on the Lie algebra $(\mh,[\;,\,]_\mh)$ defined above is $\ad_\mh$-invariant. Then $(\mg,g)$ is of Killing type.
\end{cor}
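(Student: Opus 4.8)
The plan is to deduce this corollary directly from Proposition \ref{lm:lemme}, applied to the given central unit vector $t$ together with its orthogonal complement $E := \langle t\rangle^\bot$. It then suffices to verify the two hypotheses of that proposition. The first, $\ad_t(E)\subseteq E$, is immediate: since $t$ lies in the center of $\mg$ we have $\ad_t=0$, so $\ad_t(E)=0\subseteq E$.

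The substance lies in checking condition \eqref{eq:xyz}, that is, $g(\ad_xy,z)+g(y,\ad_xz)=0$ for all $x,y,z\in E$. Here I would transport everything to the quotient $\mh=\mg/\langle t\rangle$ via the isometry $p|_E:E\lra\mh$. For $x,y\in E$, the bracket $[x,y]\in\mg$ projects under $p$ to $[\bar x,\bar y]_\mh$ and has $t$-component $g([x,y],t)\,t=\omega(\bar x,\bar y)\,t$. Writing $[x,y]=w+g([x,y],t)\,t$ with $w\in E$, the identity $p(w)=[\bar x,\bar y]_\mh$ holds, and pairing against $z\in E=\langle t\rangle^\bot$ kills the central term, giving $g([x,y],z)=g(w,z)=h([\bar x,\bar y]_\mh,\bar z)$ because $p|_E$ is an isometry. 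Applying the same computation to $g(y,[x,z])$, condition \eqref{eq:xyz} becomes exactly
\[
h([\bar x,\bar y]_\mh,\bar z)+h(\bar y,[\bar x,\bar z]_\mh)=0,
\]
which is precisely the $\ad_\mh$-invariance of $h$ assumed in the hypothesis. Thus both conditions of Proposition \ref{lm:lemme} hold and $(\mg,g)$ is of Killing type.

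The one point requiring care, and the only place where the structure of the central extension genuinely enters, is the observation that the cocycle term $\omega(\bar x,\bar y)\,t$ is annihilated upon pairing with $z\in E$. This is what allows the bracket on $\mg$, whose associated metric $g$ need not be $\ad$-invariant, to restrict along $E$ to the $\ad$-invariant bracket on $\mh$. Apart from this observation, the argument is a routine unwinding of the isometric isomorphism $f:\mg\lra\R t\oplus_\omega\mh$ followed by an application of Proposition \ref{lm:lemme} as a black box; I do not expect any serious obstacle.
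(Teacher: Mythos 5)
Your proof is correct and follows the same route as the paper: both verify the two hypotheses of Proposition \ref{lm:lemme} for the central unit vector $t$ and $E=\langle t\rangle^\bot$, deducing \eqref{eq:xyz} from the $\ad_\mh$-invariance of $h$ via the isometry $p|_E:E\to\mh$. You merely spell out in more detail the observation (which the paper leaves implicit) that the cocycle term $\omega(\bar x,\bar y)\,t$ is orthogonal to $E$ and hence drops out when pairing with $z$.
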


\begin{proof}
Since $t$ is in the center of $\mg$, we have $\dd t=0$ by \eqref{eq:symdiffx}. So, in particular, $\dd t\in \Sym^2 E$, for $E:=\lela t\rira^\bot$.
The fact that the metric $h$ in $\mh$ is ad-invariant implies that \eqref{eq:xyz} holds, since the projection $p:E\lra \mh$ is an isometry. Therefore, the result follows from Proposition \ref{lm:lemme}.
\end{proof}

Finally, we shall consider Lie algebras which are direct sum of two orthogonal ideals, one of which is one-dimensional.

\begin{pro}\label{pro:exttrivial} Let $(\mg,g)$ be a metric Lie algebra which can be decomposed as an orthogonal direct sum of ideals as $\mg=\R t\oplus \mh$. Then $(\mg,g)$ is of Killing type.
\end{pro}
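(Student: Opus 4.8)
The plan is to adapt the argument of Proposition \ref{lm:lemme} to the vector $t$, the point being that here the hypothesis \eqref{eq:xyz} need not hold on $\mh$, but this is compensated by the fact that $t$ turns out to be central. First I would record the structural consequences of the decomposition. Since $\mh$ and $\R t$ are both ideals and $\mg=\R t\oplus\mh$ is a direct sum, we have $[\mh,t]\subseteq\mh\cap\R t=0$, and of course $[t,t]=0$, so $\ad_t=0$; that is, $t$ is central. Formula \eqref{eq:symdiffx} then gives $\dd t=-(\ad_t+\ad_t^*)=0$. Moreover, writing $E:=\langle t\rangle^\bot=\mh$ and using Koszul's formula \eqref{eq.Koszul1} together with $\ad_t=0$, one checks that $\nabla_x t=\nabla_t x=0$ for all $x$ and that $\nabla_x\mh\subseteq\mh$ for $x\in\mh$. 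Since $\dd$ is a derivation, this shows that $\dd$ preserves $\Sym^*\mh\subseteq\Sym^*\mg$ and coincides there with the symmetric differential $\dd_\mh$ of $(\mh,h)$.

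Next I would run the division argument exactly as in Proposition \ref{lm:lemme}. Let $K$ be a conformal Killing tensor with $\dd K=\LL B$. Dividing $K$ by the monic polynomial $\LL=\LL_E+t^2$ in $\R[e_1,\ldots,e_n][t]$ yields $K=\LL\cdot P+t\cdot T+C$ with $P\in\Sym^*\mg$ and $T,C\in\Sym^*\mh$. Because $\dd$ commutes with $\LL$ (see \eqref{eq:LD}) and $\dd t=0$, applying $\dd$ gives $\dd K=\LL\,\dd P+t\cdot\dd T+\dd C$, so the conformal Killing condition reduces to $t\cdot\dd T+\dd C=\LL\tilde B$ with $\tilde B:=B-\dd P$. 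By the equivalence of \eqref{it1} and \eqref{it3} in Proposition \ref{pro:extK0}, it suffices to prove $\tilde B=0$, for then $B=\dd P\in\Im(\dd)$ and $K$ is of Killing type.

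The final step is a degree count in the variable $t$. Since $\dd T=\dd_\mh T$ and $\dd C=\dd_\mh C$ lie in $\Sym^*\mh$, the left-hand side $t\cdot\dd T+\dd C$ has degree at most $1$ in $t$. On the other hand, if $\tilde B\neq 0$ had top degree $d$ in $t$, the coefficient of $t^{d+2}$ in $\LL\tilde B=(\LL_E+t^2)\tilde B$ would be the nonzero top coefficient of $\tilde B$, which is impossible since $d+2\geq 2$. Hence $\tilde B=0$, so $K$ is of Killing type; as $K$ was arbitrary, $(\mg,g)$ is of Killing type. The only genuine obstacle is the very first step: Proposition \ref{lm:lemme} cannot be quoted verbatim because \eqref{eq:xyz} may fail on $\mh$, so one must instead exploit $\dd t=0$ (equivalently $[\dd,\LL_E]=-2\,t\cdot\dd t=0$ by \eqref{eq:dLE}), which is exactly what collapses the $t$-degree bookkeeping and makes the argument go through without any $\ad$-invariance assumption on $\mh$.
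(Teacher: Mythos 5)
Your argument is correct and is essentially the paper's own proof: both rest on the observation that $\dd t=0$ and that $\dd$ restricts to the symmetric differential of $(\mh,h)$ on $\Sym^*\mh$, followed by the division $K=(\LL_E+t^2)\cdot P+t\cdot T+C$ and the comparison of $t$-degrees forcing $\tilde B=B-\dd P=0$. The extra details you supply (centrality of $t$, the vanishing of $\nabla t$, the explicit top-coefficient argument) are correct and merely make explicit what the paper leaves implicit.
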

\begin{proof}
The Lie algebra $\mg$ is a direct sum of  orthogonal ideals $\R t$ and $\mh$, and we may assume that $t$ is of unit length. 

By \eqref{eq:dddefi}, the differential $ \dd$ on vectors  of $(\mg,g)$ verifies
\begin{equation}
\dd t=0, \qquad \dd x=\dd_0x, \qquad \forall x\in \mh,\label{eq:dd1}
\end{equation}
where, if $h$ denotes the metric induced by $g$ on $\mh$,  $\dd_0$ is the symmetric differential of $(\mh,h)$.
We  shall consider  an orthonormal basis $\{e_1, \ldots, e_n\}$  of $(\mh,h)$. 

Let $K$ be a conformal Killing tensor in $(\mg,g)$, and decompose it, as an element in $\R[e_1, \ldots, e_n][t]$, with respect to the variable $t$:
\[
K(t)=(\LL_0+t^2)\cdot P(t)+t\cdot T+C,
\]
where $P$ is a polynomial in $\R[e_1, \ldots, e_n][t]$ and $T,C$ are polynomials in $\R[e_1, \ldots, e_n]$. 

Then using \eqref{eq:dd1} and the decomposition of $K$, the equation $\dd K=\LL B$  becomes
\[
(\LL_0+t^2)\cdot \dd P(t)+t\cdot \dd_0 T+\dd_0 C =(\LL_0+t^2)\cdot  B,
\]
where $\LL_0$ is the operator induced by the metric in $\mh$.
Rearranging the terms in the last equation we get
\[
t\cdot \dd_0 T+\dd_0 C =(\LL_0+t^2)\cdot ( B-\dd P).
\]
The left-hand side of this equation is a polynomial of degree at most one in $t$, and the right-hand side has degree at least 2 on $t$, unless $B-\dd P=0$. This implies that $B=\dd P\in \Im(\dd)$ must hold.
\end{proof}

\section{Metric Lie algebras of dimension at most 3}

In this section we shall prove that every metric Lie algebra of dimension $\leq 3$ is of Killing type. 

Recall that on abelian Lie algebras, every metric is ad-invariant, so any abelian metric Lie algebra is of Killing type in virtue of Example \ref{ex:biinv}.

Moreover, any 2-dimensional Lie algebra is solvable, and a Lie algebra of dimension 3 is either simple or solvable. By a simple inspection of the isomorphism classes of 2- and 3-dimensional real Lie algebras (see \cite[I. 4]{JA}) one can check that every solvable Lie algebra of dimension 2 and 3 has a codimension one abelian ideal. In our context, this fact together with Corollary \ref{cor:abideal} imply the following.

\begin{pro}
Every solvable metric Lie algebra of dimension $2$ or $3$ is of Killing type.
\end{pro}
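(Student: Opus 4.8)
The plan is to reduce the claim to a direct application of Corollary~\ref{cor:abideal}, so the real content lies in a structural fact about low-dimensional solvable Lie algebras rather than in any new analytic argument. Specifically, I would establish that every solvable real Lie algebra of dimension $2$ or $3$ admits a codimension-one \emph{abelian} ideal. Granting this, the result follows immediately: given such a $\mg$ with abelian ideal $\mh$ of codimension one, the restriction $h$ of any metric $g$ to $\mh$ is automatically $\ad_\mh$-invariant because $\mh$ is abelian (so $\ad_\mh$ acts trivially and \eqref{eq:adinv} holds vacuously on $\mh$). Corollary~\ref{cor:abideal} then asserts that $(\mg,g)$ is of Killing type, for the given metric $g$; since $g$ was arbitrary, this covers all metrics.

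To prove the structural fact, I would argue case by case using the classification of isomorphism classes. In dimension $2$ every Lie algebra is solvable, and the only non-abelian one is the affine algebra with bracket $[x,y]=y$; here $\langle y\rangle$ is a one-dimensional (hence abelian) ideal of codimension one. In dimension $3$, a solvable Lie algebra $\mg$ has $\dim \mg' \le 2$, and in the classification (see \cite[I.4]{JA}) one checks that each of the Heisenberg algebra, the algebras $\mathfrak{r}_3$, $\mathfrak{r}_{3,\lambda}$, $\mathfrak{r}'_{3,\lambda}$, and the decomposable cases admits a two-dimensional abelian ideal. The cleanest uniform way to see this is to note that the derived algebra $\mg'$ of a three-dimensional solvable Lie algebra is at most two-dimensional and nilpotent; when $\dim\mg'\le 1$ one easily enlarges $\mg'$ to a codimension-one abelian ideal, and when $\dim\mg'=2$ the derived algebra is itself a two-dimensional nilpotent (hence abelian) ideal of codimension one.

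The main obstacle is the purely bookkeeping step of verifying, across the finite list of isomorphism classes, that the requisite abelian ideal exists; this is where one must be careful not to overlook a case in which the natural codimension-one ideal is non-abelian. The subtle point is that a codimension-one ideal always exists for solvable Lie algebras (by Lie's theorem applied to the adjoint representation), but it need not be \emph{abelian}; the classification is what guarantees that an \emph{abelian} one can be chosen in these low dimensions. In practice this verification is routine given \cite[I.4]{JA}, so I would state it as an inspection of the classification and invoke Corollary~\ref{cor:abideal} to conclude.
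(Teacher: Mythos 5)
Your proposal is correct and follows essentially the same route as the paper: the authors likewise observe (by inspection of the classification in \cite[I.4]{JA}) that every solvable Lie algebra of dimension $2$ or $3$ has a codimension-one abelian ideal, on which any restricted metric is vacuously $\ad$-invariant, and then invoke Corollary \ref{cor:abideal}. Your case analysis via $\dim\mg'$ merely fills in the ``simple inspection'' that the paper leaves to the reader.
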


In the $3$-dimensional simple case, we shall refer to a result due to Milnor.
\begin{pro}\cite[\textsection 4]{Mil76}\label{pro:milbasis}
Any metric simple Lie algebra of dimension $3$ admits an orthonormal basis $\{x,y,z\}$ whose Lie brackets verify
\begin{equation}
[x,y]=az, \qquad [y,z]=bx,  \qquad [z,x]=cy,
\label{eq:brmilnor}
\end{equation}
for some non vanishing constants $a,b,c\in \R$.
\end{pro}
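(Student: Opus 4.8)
The plan is to represent the Lie bracket by a single endomorphism using the cross product available on an oriented three-dimensional Euclidean space. Fix an orientation on $\mg$ and let $\times$ denote the associated cross product, determined by $g(u\times v,w)=\det(u,v,w)$. Since $(u,v)\mapsto u\times v$ induces a linear isomorphism $\Lambda^2\mg\xrightarrow{\sim}\mg$ and the bracket is an alternating bilinear map $\mg\times\mg\to\mg$, there is a unique $L\in\End(\mg)$ with $[u,v]=L(u\times v)$ for all $u,v\in\mg$. The operator $C_u:=u\times(\,\cdot\,)$ is skew-symmetric with respect to $g$ (because $g(u\times v,w)$ is totally antisymmetric), and $\ad_u=L\circ C_u$.

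The key step is to show that $L$ is self-adjoint. First I would observe that a simple Lie algebra is perfect, $\mg=[\mg,\mg]$, so the linear functional $u\mapsto\tr\ad_u$ vanishes: it kills every bracket, since $\tr\ad_{[u,v]}=\tr[\ad_u,\ad_v]=0$, hence vanishes on all of $\mg=[\mg,\mg]$; thus $\mg$ is unimodular. Next, decompose $L=S+A$ into its symmetric and skew-symmetric parts. Since the product of a symmetric and a skew-symmetric endomorphism has vanishing trace, $\tr\ad_u=\tr(LC_u)=\tr(AC_u)$. Writing $A=C_w$ for the unique vector $w$ with $A=w\times(\,\cdot\,)$ and using the identity $w\times(u\times v)=g(w,v)u-g(w,u)v$, a short computation gives $\tr(C_wC_u)=-2g(w,u)$, so that $\tr\ad_u=-2g(w,u)$ for every $u$. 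Unimodularity then forces $w=0$, hence $A=0$ and $L=S$ is symmetric. I expect this trace computation (Milnor's lemma identifying unimodular metric Lie algebras with those for which $L$ is self-adjoint) to be the main technical point, although it reduces to the elementary cross-product identity above.

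Finally, I would diagonalize the symmetric operator $L$: choose an orthonormal eigenbasis $\{x,y,z\}$, taken positively oriented (replacing one vector by its opposite if necessary, which reverses orientation), with eigenvalues $b,c,a$ for $x,y,z$ respectively. Then $x\times y=z$, $y\times z=x$, $z\times x=y$, and therefore $[x,y]=L(z)=az$, $[y,z]=L(x)=bx$ and $[z,x]=L(y)=cy$, which is exactly \eqref{eq:brmilnor}. It remains to see that $a,b,c$ are non-zero. If one of them vanished, say $b=0$, then $[y,z]=0$ while $[x,y]=az$ and $[x,z]=-cy$ both lie in $\mathrm{span}\{y,z\}$; hence $\mathrm{span}\{y,z\}$ would be a proper non-trivial ideal, contradicting the simplicity of $\mg$. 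Thus $a,b,c$ are all non-zero, completing the argument.
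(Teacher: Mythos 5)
Your argument is correct and follows essentially the same route as the cited source: the paper offers no proof of this proposition, deferring entirely to \cite[\textsection 4]{Mil76}, and what you have written is precisely Milnor's argument (encode the bracket as $[u,v]=L(u\times v)$, use unimodularity---which you correctly deduce from simplicity via perfectness---to conclude that $L$ is self-adjoint, then diagonalize $L$ in a positively oriented orthonormal eigenbasis). The final step ruling out a vanishing eigenvalue by exhibiting a proper $2$-dimensional ideal is also correct and completes the proof.
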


We  shall use the basis above to show that every simple metric Lie algebra of dimension $3$ is of Killing type.

Let $(\mg,g)$ be a metric simple Lie algebra of dimension 3 and let $\{x,y,z\}$ be an orthonormal basis as in Proposition \ref{pro:milbasis}.
Using \eqref{eq:symdiffx}, the differential map $\dd$ in the basis elements becomes
\begin{equation}\label{eq:dss}
\dd x=\alpha\, yz,\qquad \dd y=\beta\, xz,\qquad \dd z=\gamma \, xy,
\end{equation}
where $ \alpha:=c-a$, $\beta:=a-b$, $\gamma:=b-c$.

Since $\alpha+\beta+\gamma=0$, we have that either $\dd:\mg\lra \Sym^2\mg$ is injective or has a kernel of odd dimension. The former case corresponds to $a\neq b\neq c\neq a$, the latter can occur only when, up to reordering, $a=b\neq c$ or $a=b=c$.

\begin{pro} \label{pro:3dimsimple} Every metric simple Lie algebra of dimension $3$ is of Killing type. 
\end{pro}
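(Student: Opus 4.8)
The plan is to treat the two cases identified just above the statement separately, according to whether the map $\dd:\mg\lra\Sym^2\mg$ is injective (the generic case $a\neq b\neq c\neq a$) or has nontrivial kernel (the cases $a=b\neq c$ or $a=b=c$, up to reordering). In both cases the strategy is the same: since $(\mg,g)$ is of Killing type if and only if $\Im(\dd)\cap\Im(\LL)=\Im(\LL\dd)$, I would fix a trace-free conformal Killing tensor $K_0$ and, by Proposition \ref{pro:extK0}, reduce the problem to showing that $\delta K_0\in\Im(\dd)$. The key computational input is the explicit form of $\dd$ on the basis from \eqref{eq:dss}, together with the fact that $\dd$ is a derivation of $\Sym^*\mg$ and that the three directions $x,y,z$ each have $\ad$ skew-symmetric on the $2$-plane they generate, so that $S_x,S_y,S_z$ vanish and the differential of each basis vector is a single monomial.

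First I would handle the case $a=b=c$, which corresponds to a bi-invariant metric on $\so(3)$: here $\alpha=\beta=\gamma=0$, so $\dd\equiv 0$ and every symmetric tensor is Killing by Example \ref{ex:biinv}; the conclusion is immediate. Next, for the case $a=b\neq c$ (up to reordering), the kernel of $\dd$ on $\mg$ is one-dimensional, spanned by a distinguished basis vector. I would exploit Proposition \ref{lm:lemme}: taking $t$ to be the unit vector spanning this kernel direction, the condition $\dd t=0$ holds, and the remaining hypotheses $\ad_t(E)\subseteq E$ and the skew-symmetry \eqref{eq:xyz} on $E=\langle t\rangle^\bot$ should follow directly from the bracket relations \eqref{eq:brmilnor}, since two of the structure constants coincide. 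This would let me conclude that $(\mg,g)$ is of Killing type by a direct application of that proposition, avoiding any tensorial computation.

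The genuinely hard case, and the main obstacle, is the generic one where $\dd$ is injective and no direction is preferred, so Proposition \ref{lm:lemme} does not apply. Here I would argue directly on the algebraic operators. The idea is to use the graded structure: decompose a given trace-free conformal Killing $p$-tensor $K_0$ in the monomial basis $\{x^iy^jz^k\}$ and apply the conformal Killing equation $\dd K_0=a_0\LL\delta K_0$ from Proposition \ref{K0cK}. Because $\dd x,\dd y,\dd z$ are, respectively, $\alpha\,yz$, $\beta\,xz$, $\gamma\,xy$, the operator $\dd$ acts on monomials by a combinatorially controlled rule (raising two of the exponents while lowering the third), and I would read off a recursion on the coefficients of $K_0$ forcing $\delta K_0$ to lie in $\Im(\dd)$. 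The main difficulty will be checking that the resulting linear system is never singular when $\alpha,\beta,\gamma$ are all nonzero and distinct; I expect that the injectivity of $\dd$ combined with the relation $\alpha+\beta+\gamma=0$ is exactly what makes the relevant coefficients nonzero, in the same spirit as the nonvanishing of $a_0 b_{t-s}$ was the crux in Corollary \ref{cor:32}. Once $\delta K_0\in\Im(\dd)$ is established in all cases, Proposition \ref{pro:extK0} and Definition \ref{def:kt} give the claim.
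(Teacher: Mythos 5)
Your treatment of the degenerate cases is fine, but the generic case --- which you correctly identify as the heart of the matter --- is not proved: you only sketch a plan (decompose $K_0$ in monomials, extract a recursion, hope the linear system is nonsingular) and explicitly leave the crucial nondegeneracy unverified. Worse, the premise that forces you into this computation is false. You assert that Proposition \ref{lm:lemme} ``does not apply'' when $\dd$ is injective because ``no direction is preferred,'' apparently reading the condition $\dd t=0$ into its hypotheses. But the hypotheses of Proposition \ref{lm:lemme} are purely bracket-theoretic: one needs a unit vector $t$ with $\ad_t(E)\subseteq E$ and $g(\ad_uv,w)+g(v,\ad_uw)=0$ for all $u,v,w\in E$; the condition $\dd t\neq 0$ only enters the supplementary statement about parities of Killing tensors. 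In the Milnor basis \eqref{eq:brmilnor} one can take $t:=x$ and $E=\mathrm{span}\{y,z\}$ for \emph{any} values of $a,b,c$: then $\ad_x(E)\subseteq E$, and $[E,E]\subseteq\langle x\rangle=E^\bot$, so both terms in \eqref{eq:xyz} vanish identically and the condition holds vacuously. This is exactly the paper's proof --- two lines, uniform in all cases --- and it is also how the paper later handles the injective case (see the proof of Proposition \ref{pro:killdinj}, which applies Proposition \ref{lm:lemme} to each of $t\in\{x,y,z\}$ precisely when $\alpha,\beta,\gamma$ are all nonzero).

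So the fix is simple: drop the case distinction entirely and apply Proposition \ref{lm:lemme} with $t=x$. As written, your argument establishes the result only when two or more of $a,b,c$ coincide, and the remaining combinatorial argument would need to be carried out in full (including the nonsingularity claim) before the proof could be considered complete.
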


\begin{proof} Let $g$ be a metric on a simple Lie algebra $\mg$ of dimension $3$ and let  $\{x,y,z\}$ be an orthonormal basis of $(\mg,g)$ as in Proposition \ref{pro:milbasis}.

We may apply Proposition \ref{lm:lemme} to $t:=x$ and its orthogonal, $E:={\rm span}\{y,z\}$. By \eqref{eq:brmilnor}, we have that $\ad_x (E)\subseteq E$ and $[E,E]\subseteq \lela x\rira$. The latter implies that Equation \eqref{eq:xyz} is verified. Therefore, the hypotheses of Proposition \ref{lm:lemme} are satisfied and thus $(\mg,g)$ is of Killing type.
\end{proof}

The rest of the section aims to describe the set of left-invariant Killing tensors of 3-dimensional metric simple Lie algebras such that $\dd:\mg \lra \Sym^2\mg$ is injective. This description will be used in the next section to study conformal Killing tensors on 4-dimensional metric Lie algebras.

Let $(\mg,g)$ be a 3-dimensional simple metric Lie algebra and let $\{x,y,z\}$ be an orthonormal basis as in Proposition \ref{pro:milbasis}, whose differentials are given by \eqref{eq:dss}. Let $J$ denote the  symmetric tensor 
\begin{equation}\label{eq:J}
J:=\alpha y^2-\beta x^2.
\end{equation}
Then $\dd J=2\alpha \beta xyz-2\beta\alpha xyz=0$ so $J$ is a Killing tensor on $(\mg, g)$. 

Let $\mcQ(J,\LL)$ be the set of polynomials in the symmetric 2-tensors $J$ and $\LL$, namely,
\[
\mcQ(J,\LL)=\{\sum_{i=1}^k\lambda_i J^{r_i}\LL^{s_i}\mid \lambda_i\in \R, \,r_i,s_i,k\in \Z_{\geq 0}\}.
\]
Since $J$ and $\LL$ are Killing and $\dd$ is a derivation of the algebra of symmetric tensors, every element in $\mcQ(J,\LL)$ is a Killing tensor in $(\mg,g)$. The following result shows that if $\dd$ is injective, then every symmetric Killing tensor in $(\mg,g)$ lies in $\mcQ(J,\LL)$.

\begin{pro}\label{pro:killdinj} Let $(\mg,g)$ be a $3$-dimensional simple metric Lie  algebra, and let $\{x,y,z\}$ be an orthonormal basis such that \eqref{eq:dss} holds. If $\alpha,\beta,\gamma$ are all non-zero, then every symmetric Killing tensor on $(\mg,g)$ is in $\mcQ(J,\LL)$.
\end{pro}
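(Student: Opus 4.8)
The plan is to prove the nontrivial inclusion $\ker\dd\subseteq\mcQ(J,\LL)$, the reverse inclusion being already established above. The strategy proceeds in two stages: first I would show that every Killing tensor is a polynomial in the squares $x^2,y^2,z^2$ alone, and then I would solve the resulting constant-coefficient problem explicitly.

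For the first stage I would apply Proposition \ref{lm:lemme} three times, once for each basis vector. Taking $t=z$ and $E=\spam\{x,y\}$, the brackets \eqref{eq:brmilnor} give $\ad_z(E)\subseteq E$, while $[E,E]\subseteq\lela z\rira$ is orthogonal to $E$, so \eqref{eq:xyz} holds trivially; moreover $\dd z=\gamma\, xy\neq0$ since $\gamma\neq0$. Hence every Killing tensor has even degree in $z$. The relations \eqref{eq:brmilnor} are invariant under the cyclic permutation $x\to y\to z\to x$ together with $a\to b\to c\to a$, so the same argument with $t=x$ and with $t=y$ (using $\alpha\neq0$, $\beta\neq0$) shows that every Killing tensor also has even degree in $x$ and in $y$. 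Consequently a Killing tensor $K$ has even degree in each variable separately, which means $K$ lies in the subring $\R[x^2,y^2,z^2]\subseteq\Sym^*\mg$.

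For the second stage, write $u=x^2$, $v=y^2$, $w=z^2$ and $K=F(u,v,w)$ with $F\in\R[u,v,w]$. Since $\dd$ is a derivation with $\dd(x^2)=2\alpha\,xyz$, $\dd(y^2)=2\beta\,xyz$, $\dd(z^2)=2\gamma\,xyz$ by \eqref{eq:dss}, the chain rule yields
\[
\dd K=2\,xyz\,(\alpha F_u+\beta F_v+\gamma F_w).
\]
As $\Sym^*\mg=\R[x,y,z]$ is an integral domain and $xyz\neq0$, the Killing condition $\dd K=0$ is equivalent to the constant-coefficient linear equation $\alpha F_u+\beta F_v+\gamma F_w=0$ in $\R[u,v,w]$. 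Here $\LL=u+v+w$ and $J=\alpha v-\beta u$ are two linearly independent solutions (recall $\alpha+\beta+\gamma=0$). Choosing any linear form $\eta$ with $\alpha\eta_u+\beta\eta_v+\gamma\eta_w=1$, the triple $(\LL,J,\eta)$ is a linear coordinate system on $\R^3$ in which the operator $\alpha\partial_u+\beta\partial_v+\gamma\partial_w$ becomes $\partial_\eta$. The equation then reads $\partial_\eta F=0$, whose polynomial solutions are exactly $\R[\LL,J]$. Therefore $K=F\in\R[\LL,J]=\mcQ(J,\LL)$, as required.

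The conceptual crux is the first stage: once the parity reduction confines $K$ to $\R[x^2,y^2,z^2]$, the remaining problem collapses to a transparent constant-coefficient transport equation whose polynomial first integrals are manifestly generated by $J$ and $\LL$. The only point requiring care is checking that Proposition \ref{lm:lemme} applies for all three choices of $t$, and it is precisely here that the hypothesis that $\alpha,\beta,\gamma$ are all nonzero (equivalently $\dd x,\dd y,\dd z\neq0$) is used. I note that the parity statement can also be obtained directly, by expanding $K=\sum_k K_k(x,y)\,z^k$ and reading off from $\dd K=0$ a two-term recursion linking $K_{m+1}$ to $K_{m-1}$: the coefficient of $z^0$ forces $K_1=0$, whence all odd coefficients vanish.
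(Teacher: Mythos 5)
Your argument is correct. The first stage coincides exactly with the paper's: both apply the second part of Proposition \ref{lm:lemme} to each of $t=x,y,z$ (the hypotheses hold since $[E,E]\subseteq\lela t\rira$ and $\ad_t(E)\subseteq E$ by \eqref{eq:brmilnor}, and $\dd t\neq 0$ because $\alpha,\beta,\gamma\neq0$) to conclude that a Killing tensor contains only even powers of each variable. Where you genuinely diverge is in the second stage. The paper does not pass to the subring $\R[x^2,y^2,z^2]$; instead it argues by induction on the degree $p$, using the decomposition $K=\LL P+C$ with $C\in\R[y,z]$ produced inside the proof of Proposition \ref{lm:lemme} for $t=x$, invoking the inductive hypothesis on the Killing tensor $P$, and killing $C$ through an explicit two-term recursion on its coefficients $\lambda_i$. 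You instead exploit the parity reduction fully: writing $K=F(x^2,y^2,z^2)$ and using that $\dd$ is a derivation with $\dd(x^2)=2\alpha\,xyz$ etc., the Killing condition becomes the constant-coefficient transport equation $\alpha F_u+\beta F_v+\gamma F_w=0$ in $\R[u,v,w]$ (the injectivity of $\R[u,v,w]\to\R[x,y,z]$ and the integral-domain property justify cancelling $2xyz$), whose polynomial first integrals are visibly generated by the two independent linear solutions $\LL=u+v+w$ and $J=\alpha v-\beta u$ after the linear change of coordinates $(\LL,J,\eta)$. Your route replaces the paper's induction and coefficient recursion by a one-line PDE argument and makes transparent \emph{why} the answer is exactly $\mcQ(J,\LL)$; the paper's induction has the minor advantage of reusing verbatim the structural decomposition already established in Proposition \ref{lm:lemme} rather than the full three-fold parity statement. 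Both are complete proofs.
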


\begin{proof}
Notice that, since the differentials of the basis elements have the form \eqref{eq:dss}, the hypotheses of Proposition \ref{lm:lemme} hold for $t\in \{x,y,z\}$ and $E:=\lela t\rira^\bot$. Moreover, these differentials  are non-zero since $\alpha, \beta, \gamma$  do not vanish.

Therefore, by the second part of Proposition \ref{lm:lemme}, any symmetric Killing tensor in $\mg$ has even degree in each of the variables $x$, $y$, $z$. In particular, $\mg$ has no Killing vectors (i.e. Killing tensors  of degree 1), and every Killing $2$-tensor has the form $K=mx^2+ny^2+lz^2$, with $m,n,l\in \R$. Moreover, using \eqref{eq:dss}, $0=\dd K$  implies
\[
0=\alpha m+\beta n+\gamma l=\alpha m+\beta n-(\alpha+\beta) l,
\]and thus $m-l=\frac{(l-n)\beta}{\alpha}$. Making use of $\LL$ to rewrite $K$ as  $K=l\LL +(m-l)x^2+(n-l)y^2$, we obtain
\[
K=l \LL +\frac{(n-l)}{\alpha}(\alpha y^2-\beta x^2)=l\LL+\frac{(n-l)}{\alpha} J.
\]So every symmetric Killing 2-tensor on $(\mg,g)$ is in $\mcQ(J,\LL)$.

Now we proceed by induction. Suppose that every symmetric Killing $p-2$-tensor is  in $\mcQ(J,\LL)$, and let $K$ be a Killing $p$-tensor. By (the second part of the proof of) Proposition \ref{lm:lemme} applied to $t:=x$, we can write $K$ as
$K=\LL P+C$, where $P\in \R[x,y,z]$ and $C\in \R[y,z]$ are both Killing tensors of degree $p-2$ and $p$, respectively. The inductive hypothesis implies that $P\in \mcQ(J,\LL)$, and thus $\LL P\in \mcQ(J,\LL)$. We shall prove that $C=0$ and thus the result will follow.

Since $C$ is a symmetric $p$-tensor in $\R[y,z]$, we may write it as $C=\sum_{i=0}^p\lambda_i y^i z^{p-i}$, for some $\lambda_i\in \R$. Then
\[
0=\dd C=x\cdot \sum_{i=0}^p(i\beta \lambda_i  y^{i-1} z^{p-i+1}+(p-i)\gamma \lambda_i y^{i+1}z^{p-i-1}).
\]Hence the coefficients must satisfy
\[\lambda_p=\lambda_{p-1}=0,\; \mbox{ and }\;(i+1)\beta \lambda_{i+2}+(p-i)\gamma \lambda_i=0, \quad \mbox{ for }i=0, \ldots, p-2. \]
This implies $\lambda_i=0$ for all $i$, and thus $C=0$.
\end{proof}

\section{4-dimensional metric Lie algebras}

Let $(\mg,g)$ be a 4-dimensional metric Lie algebra.
Using Levi's decomposition theorem and the fact that the only semisimple Lie algebras of dimension $\leq 4$ are simple and 3-dimensional, we obtain that $\mg$ is either solvable or a direct sum of ideals $\mg=\ms\oplus \R$, with $\ms$ being 3-dimensional simple. In general, this decomposition is not orthogonal with respect to $g$. We will consider the two cases separately.

\subsection{Non-solvable case}

Let $g$ be a metric on a non-solvable 4-dimensional Lie algebra $\mg$. As mentioned above, $\mg$ is the direct sum of ideals $\mg=\ms\oplus \R$, so in particular, it has non-trivial center. Fix a unit vector $t$ in the center of $\mg$ and denote $E:=\lela t\rira^\bot$. As seen in Section \ref{sec:centext}, $(\mg,g)$ is isometrically isomorphic to the central extension $(\ms\oplus_\omega \R,g_0+g_\R)$, where $\omega$ is defined as in \eqref{eq:omega}, and $g_0$ is the metric induced on $\mg/\lela t\rira\simeq \ms$ such that the quotient map $p|_E:E \lra \mg/\lela t\rira$ is an isometry.

Let $\{x,y,z\}$ be an orthonormal basis of $(\ms, g_0)$ as in Proposition \ref{pro:milbasis}. 
We shall write $\omega$ in this basis as
\begin{equation}\label{eq:omegacan}\omega=p \,x\wedge y+q\, y\wedge z+r\, z\wedge x, \qquad p,q,r\in \R.\end{equation}

The symmetric differential $\dd_0$ of $(\ms,g_0)$ verifies \eqref{eq:dss}, with $\alpha+\beta+\gamma=0$. Recall that $J$ defined in \eqref{eq:J} and the symmetric tensor $\LL_0$  induced by $g_0$ are Killing tensor in $(\ms, g_0)$. We further denote by $\xi:=2(qx+ry+pz)$, where $p,q,r$ are the coefficients of $\omega$ in \eqref{eq:omegacan}.

We now view $\omega$ as an endomorphism of $\ms$ and extend it as a derivation to the algebra $\Sym^*\ms$. Since $\omega$ is skew-symmetric, we have $\omega(\LL_0)=0$ and straightforward computation gives $\omega( \xi)=0$.  In addition, using \eqref{eq:J} and \eqref{eq:omegacan} we obtain
\begin{eqnarray*}
\omega(J)&=&2\alpha y(-px+qz)-2\beta x(py-rz)\\
&=&-2p(\alpha+\beta) xy+2q\alpha yz+2r\beta xz\\
&=&2p\gamma xy+2q\alpha yz+2r\beta xz=2\dd_0(qx+ry+pz).
\end{eqnarray*}

These relations motivate the following definition. Let $\mathcal Q^p(\xi,J,\LL_0)\subseteq \Sym^p \ms$ denote the ring of symmetric $p$-tensors which are polynomials in $J,\LL_0$ and $\xi$.

\begin{lm} \label{lm:Qp} For every $p\ge 1$ and $S_p\in \mathcal Q^p(\xi,J,\LL_0)$ there exists $S_{p-1}\in \mathcal Q^{p-1}(\xi,J,\LL_0)$ such that $\omega( S_p)=\dd_0 S_{p-1}$.
\end{lm}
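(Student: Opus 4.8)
The plan is to reduce everything to monomials and then exploit that both $\omega$ and $\dd_0$ act as derivations of the symmetric algebra $\Sym^*\ms$. By linearity of $\omega$ and $\dd_0$, it suffices to prove the claim when $S_p$ is a single monomial $S_p=J^a\LL_0^b\xi^c$ with $2a+2b+c=p$: any element of $\mathcal Q^p(\xi,J,\LL_0)$ is a linear combination of such monomials, and the corresponding $S_{p-1}$'s can simply be added together.

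First I would apply $\omega$ to such a monomial via the Leibniz rule. The three identities recorded just before the statement, namely $\omega(\LL_0)=0$, $\omega(\xi)=0$ and $\omega(J)=\dd_0\xi$, make all but one term vanish, leaving
\[
\omega(J^a\LL_0^b\xi^c)=a\,J^{a-1}\,\omega(J)\,\LL_0^b\xi^c=a\,J^{a-1}\LL_0^b\xi^c\,\dd_0\xi .
\]
When $a=0$ the left-hand side is already zero and one takes $S_{p-1}=0$, so I may assume $a\ge 1$.

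The key step is then to recognize this surviving term as lying in the image of $\dd_0$ on $\mathcal Q^{p-1}(\xi,J,\LL_0)$. For this I would use that $J$ and $\LL_0$ are Killing, i.e. $\dd_0 J=\dd_0\LL_0=0$, so that applying $\dd_0$ to $J^{a-1}\LL_0^b\xi^{c+1}$ differentiates only the $\xi$-factor:
\[
\dd_0\!\left(J^{a-1}\LL_0^b\xi^{c+1}\right)=(c+1)\,J^{a-1}\LL_0^b\xi^c\,\dd_0\xi .
\]
Comparing the two displays gives $\omega(S_p)=\dd_0 S_{p-1}$ with
\[
S_{p-1}:=\frac{a}{c+1}\,J^{a-1}\LL_0^b\xi^{c+1},
\]
which has degree $2(a-1)+2b+(c+1)=p-1$ and lies in $\mathcal Q^{p-1}(\xi,J,\LL_0)$, as required.

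I do not expect a genuine obstacle here: the entire content is packaged in the three preparatory identities $\omega(\LL_0)=\omega(\xi)=0$ and $\omega(J)=\dd_0\xi$, which let one trade the single surviving factor $\omega(J)$ for $\dd_0\xi$, together with the observation that $\dd_0\xi$ can be absorbed into $\dd_0(\xi^{c+1})$ precisely because the remaining factors $J$ and $\LL_0$ are $\dd_0$-closed. The only point requiring a little care is the bookkeeping for the edge cases $a=0$ (where both sides vanish) and $p=1$ (where necessarily $a=b=0$ and $c=1$, so $S_1$ is a multiple of $\xi$ and $\omega(S_1)=0=\dd_0 S_0$).
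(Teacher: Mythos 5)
Your proof is correct and follows essentially the same route as the paper: reduce to monomials using that $\omega$ and $\dd_0$ are derivations, apply the identities $\omega(J)=\dd_0\xi$, $\omega(\xi)=\omega(\LL_0)=0$ and $\dd_0 J=\dd_0\LL_0=0$, and absorb the surviving factor $\dd_0\xi$ into $\dd_0(\xi^{c+1})$ with the coefficient $\tfrac{a}{c+1}$. The paper's proof is the same computation (with $m,n,l$ in place of $a,b,c$), so there is nothing to add.
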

\begin{proof}
Since both $\omega$ and $\dd_0$ are derivations of  $\Sym^* \ms$, it is enough to prove the result for monomials. Let $m,n,l\in \Z_{\geq0}$ with $2m+2n+l=p$, so that $J^m\LL_0^n\xi^l\in \mcQ^p(\xi,J,\LL_0)$.

Recall that  $\dd J=\dd \LL_0=0$ and, as shown above, $\omega(J)=\dd_0 \xi$, $\omega( \xi)=0=\omega(\LL_0)$. Then either $m=0$, in which case $\omega(J^m\LL_0^n\xi^l)=0$, or $m\ge 1$ and we can write
\[
\omega(J^m\LL_0^n\xi^l)=m \omega(J) J^{m-1} \LL_0^n\xi^l =\frac{m}{l+1} \dd_0\left(J^{m-1} \LL_0^n\xi^{l+1}\right),
\] with $J^{m-1} \LL_0^n\xi^{l+1}\in \mathcal Q^{p-1}(\xi,J,\LL_0)$.
\end{proof}

\begin{lm}\label{lm:Ap} Assume that the coefficients  $\alpha, \beta,\gamma$ in \eqref{eq:dss} corresponding to the basis of  $(\ms,g_0)$ are all non-zero.
Let  $A_{p}$, $A_{p-1}$ be symmetric tensors in $(\ms,g_0)$ of degrees $p$ and $p-1$, respectively,  such that
\begin{equation}
\dd_0 A_{p-1}=\omega(A_p),\qquad 
\dd_0A_p=\LL_0 \omega(A_{p-1}).\label{Ap-1}
\end{equation}
Then there exist sequences of symmetric tensors $\{A_i\}_{i=0}^{p}$ and $\{S_i\}_{i=0}^p$ such that for every $i= 0, \ldots, p$, $S_i\in \mathcal Q^i(\xi,J,\LL_0)$ and
\begin{eqnarray}
\dd_0 A_{i-1}&=&\omega(A_i)\label{Ai}\\
\dd_0A_i&=&\dd_0 S_i-\LL_0 \omega(A_{i-1})\label{Ai-1}.
\end{eqnarray} 
In particular, $\omega(A_2)=\lambda\omega(J)$ for some $\lambda\in\R$.
\end{lm}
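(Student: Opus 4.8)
The plan is to construct the sequences $\{A_i\}$ and $\{S_i\}$ by a downward induction on the index, starting from $i=p$ and descending to $i=0$, using the two given relations in \eqref{Ap-1} as the base of the recursion. At each stage I expect to peel off one degree by solving the equation \eqref{Ai-1} for the lower-degree tensor $A_{i-1}$, while simultaneously producing the correction term $S_i\in\mathcal Q^i(\xi,J,\LL_0)$ that absorbs the part of $\dd_0 A_i$ lying in the image of $\dd_0$. The overall strategy rests on the fact, established in Proposition \ref{pro:killdinj}, that when $\alpha,\beta,\gamma$ are all nonzero every Killing tensor on $(\ms,g_0)$ lies in $\mathcal Q(J,\LL_0)$, so that kernels of $\dd_0$ are completely understood; this is what lets me pin down the tensors $S_i$ inside the prescribed ring.

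First I would set the base case: take the given $A_p$ and $A_{p-1}$, so that \eqref{Ai} for $i=p$ is exactly the first equation of \eqref{Ap-1}. To produce $S_p$ and verify \eqref{Ai-1} for $i=p$, I would rewrite the second equation of \eqref{Ap-1} as $\dd_0 A_p+\LL_0\omega(A_{p-1})=\dd_0 S_p$, which requires exhibiting $S_p\in\mathcal Q^p(\xi,J,\LL_0)$ with this symmetric differential. Here I would invoke Lemma \ref{lm:Qp}: since $\LL_0\omega(A_{p-1})$ and $\dd_0 A_p$ combine into something annihilated by the appropriate operator, the combination is $\dd_0$ of an element of the ring $\mathcal Q$. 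Then, for the inductive step passing from $i$ to $i-1$, I would use \eqref{Ai-1} together with the derivation property of $\dd_0$ to extract $A_{i-1}$: applying a contraction (or the operator $\Lambda$) to the relation and using the commutation laws \eqref{eq:LD}, \eqref{eq:LL} relating $\Lambda$, $\LL_0$, $\dd_0$, $\delta$, I can strip off one factor of $\LL_0$ and isolate $\omega(A_{i-1})$, then recover $A_{i-1}$ itself and check \eqref{Ai} at the next level down.

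The main obstacle, as I see it, is the consistency of the two coupled equations \eqref{Ai} and \eqref{Ai-1} at each step: given $A_i$ and $S_i$, I must produce $A_{i-1}$ satisfying \emph{both} $\dd_0 A_{i-1}=\omega(A_i)$ and, after substitution into \eqref{Ai-1} at the next index, the compatibility needed to apply Lemma \ref{lm:Qp} once more. The delicate point is that $\omega(A_i)$ must actually lie in $\Im(\dd_0)$ for $A_{i-1}$ to exist, and that the correction $S_{i-1}$ must land in $\mathcal Q^{i-1}(\xi,J,\LL_0)$. I expect this to follow by applying $\dd_0$ to \eqref{Ai-1}, using $\dd_0^2$-type relations and the fact that $\omega$ commutes suitably with $\dd_0$ up to the terms computed before the lemma (namely $\omega(J)=\dd_0\xi$, $\omega(\xi)=\omega(\LL_0)=0$), so that the obstruction to solvability is itself a multiple of $\omega(J)=\dd_0\xi$ and hence harmless.

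Finally, the concluding assertion $\omega(A_2)=\lambda\omega(J)$ for some $\lambda\in\R$ should drop out by tracking the recursion down to the bottom: once I reach $A_1\in\Sym^1\ms=\ms$, the relation $\dd_0 A_1=\omega(A_2)-(\text{term from }S_2)$ combined with \eqref{Ai} at $i=2$ forces $\omega(A_2)$ to be $\dd_0$ of a vector, and the only such expressions compatible with the ring structure are scalar multiples of $\omega(J)=\dd_0\xi$. Concretely, writing $A_1=qx+ry+pz$ up to scale and comparing with the formula $\omega(J)=2\dd_0(qx+ry+pz)$ computed above, I would read off $\lambda$ directly. The hard part throughout is bookkeeping the degrees and ensuring each $S_i$ genuinely belongs to $\mathcal Q^i(\xi,J,\LL_0)$ rather than some larger space; the injectivity hypothesis on $\dd_0$ via Proposition \ref{pro:killdinj} is the tool that closes this gap.
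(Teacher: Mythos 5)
Your overall scaffolding (downward induction, Proposition \ref{pro:killdinj} to control the kernel of $\dd_0$, Lemma \ref{lm:Qp} to produce the $S_i$) matches the paper, but the step on which the whole recursion hinges is missing. To descend from level $i$ to level $i-1$ you must show that $\omega(A_{i-1})$ lies in $\Im(\dd_0)$, so that $A_{i-2}$ can be defined at all. You propose to get this by contracting \eqref{Ai-1} with $\Lambda$ and using ``$\dd_0^2$-type relations'', but no such identity is available and this does not produce a $\dd_0$-primitive. The paper's mechanism is entirely different and is the real content of the lemma: equation \eqref{Ai-1} says $\dd_0(A_i-S_i)=-\LL_0\,\omega(A_{i-1})$, i.e.\ $A_i-S_i$ is a \emph{conformal Killing tensor} on the $3$-dimensional simple metric Lie algebra $(\ms,g_0)$; Proposition \ref{pro:3dimsimple} says $(\ms,g_0)$ is of Killing type, and the equivalence \eqref{it1}$\Leftrightarrow$\eqref{it3} of Proposition \ref{pro:extK0} then gives exactly $\omega(A_{i-1})\in\Im(\dd_0)$. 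Without invoking the Killing-type property of $(\ms,g_0)$ there is no reason for this obstruction to vanish. Once $A_{i-2}$ exists, the tensor $S_i-A_i-\LL_0A_{i-2}$ is Killing, hence lies in $\mcQ(J,\LL_0)$ by Proposition \ref{pro:killdinj}, and only then does Lemma \ref{lm:Qp} apply to $\omega(A_i)$ modulo $\LL_0\omega(A_{i-2})$ to yield $S_{i-1}$; your sketch skips this identification.

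Two smaller points. For the base case the paper simply takes $S_p:=0$; Lemma \ref{lm:Qp} cannot be used the way you suggest, since it produces a $\dd_0$-primitive of $\omega(S)$ for $S$ already in the ring $\mathcal Q^p(\xi,J,\LL_0)$, not a preimage under $\dd_0$ of an arbitrary tensor. And your argument for the final claim is incorrect as stated: $\dd_0$ of a general vector is not a multiple of $\omega(J)$ (for instance $\dd_0 x=\alpha\, yz$). The correct route is that \eqref{Ai-1} for $i=2$ makes $A_2-S_2$ a conformal Killing $2$-tensor, which is Killing by Proposition \ref{pro:degleq2}, hence lies in $\mcQ(J,\LL_0)$; therefore $A_2\in\mathcal Q^2(\xi,J,\LL_0)$, and $\omega(A_2)=\lambda\,\omega(J)$ follows from $\omega(\LL_0)=\omega(\xi)=0$ together with $\omega(J)=\dd_0\xi$.
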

\begin{proof} Set  $S_p:=0$ which clearly belongs to $\mathcal Q^p(\xi,J,\LL_0)$. Then, by \eqref{Ap-1} we have that \eqref{Ai} and \eqref{Ai-1} are valid for $i=p$. Now we proceed by induction.

Suppose that \eqref{Ai} and \eqref{Ai-1} are valid for  a certain fixed value $i$. We shall define $A_{i-2}$ and $S_{i-1}$.

Equation \eqref{Ai-1} for $i$ implies that $\dd_0(A_i-S_i)=-\LL_0 \omega(A_{i-1})$ and thus $A_i-S_i$ is a conformal Killing tensor in $(\ms,g_0)$. By Proposition \ref{pro:3dimsimple} it is of Killing type, so $\omega(A_{i-1})\in \Im(\dd)$. That is, there exists a symmetric tensor $A_{i-2}$ such that $\dd_0 A_{i-2}=\omega (A_{i-1})$, so they verify \eqref{Ai}
for $i-1$.

Moreover, we obtain that $S_i-A_i-\LL_0 A_{i-2}$  is a Killing tensor in $(\ms,g_0)$. Since $\alpha, \beta,\gamma$ are non-zero, Proposition \ref{pro:killdinj} shows that $S_i-A_i-\LL_0 A_{i-2}=:K_i\in \mcQ(J,\LL_0)$. This together with \eqref{Ai-1} for $i$, imply
\[\dd_0A_{i-1}=\omega (A_i)=\omega(S_i-K_i)-\LL_0A_{i-2},
\]where $S_i-K_i\in \mathcal Q^i(\xi,J,\LL_0)$. Due to Lemma \ref{lm:Qp}, there exists $S_{i-1}\in \mathcal Q^{i-1}(\xi,J,\LL_0)$ such that $\omega(S_i-K_i)=\dd_0 S_{i-1}$. So  the previous equation becomes
\[\dd_0A_{i-1}=\dd_0 S_{i-1}-\LL_0A_{i-2},
\]
giving \eqref{Ai-1} for $i-1$.

To show the last claim for $A_2$, notice that \eqref{Ai-1} for $i=2$ implies that $A_2-S_2$ is a conformal Killing tensor in $(\ms,g_0)$ which, by Proposition \ref{pro:degleq2}, is in fact a Killing tensor. Therefore, $A_2-S_2\in \mcQ(J,\LL)$ and thus $A_2\in \mathcal Q^2(\xi,J,\LL)$. Hence, by Lemma \ref{lm:Qp}, we have $\omega(A_2)=\lambda \dd (\xi)=\lambda\omega(J)$, for some $\lambda\in \R$, since $\mathcal  Q^1(\xi,J,\LL)$ is spanned by $\xi$.
\end{proof}

Let $(\mg,g)$ be a $4$-dimensional metric Lie algebra. As pointed before, $(\mg,g)$ is isometrically isomorphic  to a central extension $(\ms\oplus_\omega \R,g_0+g_\R)$, where $(\ms,g_0)$ is a 3-dimensional metric simple Lie algebra. By \eqref{eq:brom}, the Lie bracket in $\mg$ satisfies:
\begin{equation}\label{eq:brg}
\ad_t=0, \mbox{ and  } \ad_w=\ad^\ms_w+\omega(w) \,t,\mbox{ for all }w\in \ms,
\end{equation}where $\ad^\ms$ denotes the adjoint representation of $\ms$. If $\dd$ and $\dd_0$ denote, respectively,  the differentials of $(\mg,g)$ and  $(\ms,g_0)$ on symmetric tensors, then the previous equation and  \eqref{eq:symdiffx} give
\begin{equation}\label{eq:dd0x}\begin{array}{rcl}
\dd w&=&-(\ad_w+\ad_w^*)=\dd_0w-\omega(x)\cdot w, \quad \mbox{ for all }w\in \ms,\\
\dd t&=&0.
\end{array}
\end{equation} 
These formulas imply that for every symmetric tensor $R$ on $\mg$, one has 
\begin{equation}\label{eq:dd0omega}
\dd(R)=\dd_0R-\omega(R)\cdot t.
\end{equation}

\begin{pro} Every non-solvable $4$-dimensional metric Lie algebra  is of Killing type.
\end{pro}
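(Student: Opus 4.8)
The plan is to exploit the reduction already in place: $(\mg,g)$ is isometrically isomorphic to a central extension $(\ms\oplus_\omega\R,\,g_0+g_\R)$ with $(\ms,g_0)$ a $3$-dimensional metric simple Lie algebra carrying a Milnor basis $\{x,y,z\}$ as in \eqref{eq:dss}, the differential obeying $\dd R=\dd_0 R-\omega(R)\cdot t$ for $R\in\Sym^*\ms$ by \eqref{eq:dd0omega} and $\dd t=0$. Let $K$ be a conformal Killing tensor with $\dd K=\LL B$; by the criterion following Proposition \ref{pro:extK0} it suffices to prove $B\in\Im(\dd)$. First I would run the division algorithm in $\R[x,y,z][t]$, dividing by the monic degree-two polynomial $\LL=\LL_0+t^2$, to write $K=\LL\cdot P(t)+t\cdot T+C$ with $T,C\in\Sym^*\ms$. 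Setting $\tilde B:=B-\dd P$ and using $\dd(tT)=t\,\dd_0 T-\omega(T)\,t^2$ and $\dd C=\dd_0 C-\omega(C)\,t$, the conformal Killing equation reduces to
\[
t\cdot\dd_0 T-\omega(T)\,t^2+\dd_0 C-\omega(C)\,t=(\LL_0+t^2)\tilde B .
\]

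Comparing the degrees in $t$ of the two sides (as in the proofs of Propositions \ref{lm:lemme} and \ref{pro:exttrivial}) forces $\tilde B$ to have $t$-degree $0$, i.e. $\tilde B\in\Sym^*\ms$, and matching the coefficients of $t^0,t^1,t^2$ then yields $\tilde B=-\omega(T)$ together with the coupled system $\dd_0 T=\omega(C)$ and $\dd_0 C=-\LL_0\,\omega(T)$. Thus everything reduces to proving $\omega(T)\in\Im(\dd)$, since then $B=\dd P+\tilde B=\dd P-\omega(T)\in\Im(\dd)$. In the bi-invariant case $\alpha=\beta=\gamma=0$ the map $\dd_0$ vanishes identically, the second equation becomes $\LL_0\,\omega(T)=0$, and injectivity of $\LL_0$ gives $\omega(T)=0$; so that case is immediate.

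For $\dd_0\neq0$ the pair $(C,T)$, with $C$ of degree $p$ and $T$ of degree $p-1$, is exactly a pair $(A_p,A_{p-1})$ of the form \eqref{Ap-1} (up to the sign of the second relation). I would then invoke Lemma \ref{lm:Ap}, whose hypothesis $\alpha,\beta,\gamma\neq0$ is precisely injectivity of $\dd_0$, to produce a full sequence $\{A_i\}_{i=0}^{p}$ with $\dd_0 A_{i-1}=\omega(A_i)$ for all $i$ (so in particular $\omega(A_0)=0$ from $A_{-1}=0$). The purpose of having the entire sequence is a telescoping lift from $\Im(\dd_0)$ to $\Im(\dd)$: setting $Q:=\sum_{j=0}^{p-2}A_{p-2-j}\,t^j$ and using $\dd(A_k\,t^j)=(\dd_0 A_k)\,t^j-\omega(A_k)\,t^{j+1}$, every intermediate coefficient cancels because $\dd_0 A_{p-2-k}=\omega(A_{p-1-k})$, leaving $\dd Q=\omega(A_{p-1})=\omega(T)$. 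Hence $\omega(T)\in\Im(\dd)$ and $B=\dd(P-Q)\in\Im(\dd)$, so $(\mg,g)$ is of Killing type.

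The hard part is everything feeding Lemma \ref{lm:Ap}, namely propagating the recursion that manufactures the $A_i$: at each stage one uses that $A_i-S_i$ is conformal Killing on $(\ms,g_0)$ and hence of Killing type (Proposition \ref{pro:3dimsimple}) to get $\omega(A_{i-1})\in\Im(\dd_0)$, while keeping the correction $S_i$ inside the ring $\mcQ(\xi,J,\LL_0)$ so that $\omega(S_i)$ stays in $\Im(\dd_0)$ with controlled preimage (Lemma \ref{lm:Qp}). This is exactly where injectivity of $\dd_0$ is essential: through Proposition \ref{pro:killdinj} it guarantees that the Killing tensors appearing at each step lie in $\mcQ(\xi,J,\LL_0)$, which is what makes the recursion close. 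The step I expect to be the genuine obstacle is the remaining degenerate case where $\dd_0\neq0$ is not injective (two of $a,b,c$ coincide): there $\mcQ(\xi,J,\LL_0)$ is strictly smaller than the full space of Killing tensors of $(\ms,g_0)$, Lemma \ref{lm:Ap} no longer applies verbatim, and one must instead establish the inclusion $\omega(\{\text{Killing tensors of }(\ms,g_0)\})\subseteq\Im(\dd_0)$ directly from the explicit description of the Killing tensors for that special metric.
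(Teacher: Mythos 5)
Your reduction to the central extension $(\ms\oplus_\omega\R,g_0+g_\R)$, the division by $\LL_0+t^2$, the resulting system $\tilde B=-\omega(T)$, $\dd_0T=\omega(C)$, $\dd_0C=-\LL_0\omega(T)$, and the use of Lemma \ref{lm:Ap} when $\alpha\beta\gamma\neq0$ all coincide with the paper's argument, and your telescoping sum $Q=\sum_{j=0}^{p-2}A_{p-2-j}\,t^j$ is in fact a small improvement: since $\omega(A_0)=0$ automatically (a degree-$0$ tensor is a constant and $\omega$ acts as a derivation), running the sum all the way down to $A_0$ kills the boundary term and makes the final correction $\omega(A_2)=\lambda\omega(J)$, which the paper needs for its truncated sum, unnecessary. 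Your treatment of the ad-invariant case via $\LL_0\omega(T)=0$ is also fine, and it absorbs the subcase $\omega=0$ for free.

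The genuine gap is the one you name yourself: the case where exactly one of $\alpha,\beta,\gamma$ vanishes (equivalently, exactly two of the Milnor constants $a,b,c$ coincide). Because $\alpha+\beta+\gamma=0$, this is the only configuration not covered by your two cases, so the proof is incomplete without it, and the route you sketch for it is harder than you suggest: knowing $\omega(\{\text{Killing tensors of }(\ms,g_0)\})\subseteq\Im(\dd_0)$ would not by itself re-close the recursion of Lemma \ref{lm:Ap}, since at each step one must also control the $\dd_0$-preimages (the role played by $\mcQ(\xi,J,\LL_0)$ and Lemma \ref{lm:Qp}) so that the correction terms $S_{i-1}$ again lie in a class on which $\omega$ maps into $\Im(\dd_0)$. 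The paper avoids all of this: for $\gamma=0$, $\beta=-\alpha\neq0$ it exhibits a unit vector $u$ in the four-dimensional algebra itself, namely $u=x$ if $\omega(z)=0$ and $u$ proportional to $\omega(z)=rx-qy$ otherwise, for which $\ad_u(E)\subseteq E$ and $[E,E]\subseteq\langle u\rangle$ with $E=\langle u\rangle^\perp$, so that Proposition \ref{lm:lemme} applies directly and no recursion is needed. You should close your remaining case this way rather than by classifying the Killing tensors of the degenerate $(\ms,g_0)$.
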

\begin{proof} 
Let $(\mg,g)$ be a $4$-dimensional non-solvable metric Lie algebra and consider the central extension $(\ms\oplus_\omega \R,g_0+g_\R)$ to which it is isometrically isomorphic. 

Let $\{x,y,z\}$ be an orthonormal basis of $(\ms,g_0)$ given in Proposition \ref{pro:milbasis}, so that the differential operator $\dd_0$ of $(\ms,g_0)$ satisfies \eqref{eq:dss}.

Notice that $\omega=0$ if and only if the metric Lie algebra $(\mg,g)$ is the  orthogonal direct product of $(\ms,g_0)$ and $(\R, g_\R)$. In this case, Proposition \ref{pro:exttrivial} shows that $(\mg,g)$ is of Killing type. So from now on we assume $\omega\neq 0$.
Also, if $\alpha,\beta,\gamma$ in \eqref{eq:dss} are all zero, then the metric of $(\ms,g_0)$ is ad-invariant. Therefore, the result holds by Proposition \ref{pro:exttrivial}.

Suppose now that only one of the coefficients is zero. Without loss of generality, we may assume $\gamma=0$ and $\beta=-\alpha\neq 0$; equivalently, $b=c$ in the basis given in Proposition \ref{pro:milbasis}. We claim that it is always possible to find an element $u\in \ms$ such that the hypotheses of Proposition \ref{lm:lemme} hold, which thus implies that $(\mg,g)$ is of Killing type.

If $\omega(z)=0$, we take $u:=x$ whose orthogonal space $E$ in $(\mg,g)$ is spanned by $y,z,t$.
Then $[x,t]=0$ and by \eqref{eq:brmilnor} and \eqref{eq:brg} we have
\[
[x,y]=az+\omega(x,y)t, \quad [x,z]=-by,  \;\mbox{ and }\;[E,E]\subseteq \lela x\rira.
\]
So $\ad_x$ preserves $E$ and \eqref{eq:xyz} is satisfied.

If $\omega(z)\neq0$, we fix $u:=\omega(z)=rx-qy$. In this case, $E:=\lela u\rira^\bot$ is spanned by $qx+ry$, $z$ and $t$. Again, $[u,t]=0$ and, using \eqref{eq:brmilnor},  \eqref{eq:brg} and the equality $b=c$, we obtain
\[
[\omega(z),qx+ry]=(r^2+q^2)(z+pt), \quad [\omega(z),z]=-b(qx+ry)-(r^2+q^2)t,
\]
so $\ad_{\omega(z)}$ preserves $E$, and moreover, $[E,E]\subseteq \lela \omega(z)\rira$. This proves our claim.

Finally, suppose that $\alpha,\beta,\gamma$ in \eqref{eq:dss} are all non-zero and let $\dd$ denote the differential operator of $(\mg,g)$ on symmetric tensors. Let $K$ be a conformal Killing tensor in $(\mg,g)$, with $\dd K=\LL B$ for some symmetric tensor $B$. Viewing $K$ as a polynomial in $\R[x,y,z][t]$ and using the division algorithm, we write as before
\[
K(t)=(\LL_0+t^2)\cdot P(t)+ t \cdot T+C,
\] where $P$ is viewed as a polynomial in the variable $t$ with coefficients in $\R[x,y,z]$, and $T$ and $C$ are polynomials in $x,y,z$, that is, symmetric tensors in $\ms$.
The conformal Killing equation give
\[
\LL B=\dd K=\dd \left(\LL P+t\cdot T+C\right)=\LL \dd P+t\cdot \dd T+\dd C,
\]which is equivalent to
\begin{equation}\label{eq:B0}
(\LL_0+t^2)\cdot  \tilde B=t\cdot \dd T+\dd C, 
\end{equation}
with $\tilde B:=B-\dd P$, a polynomial in $\R[t,x,y,z]$. 
Using  \eqref{eq:dd0omega} in this equation we further get
\[
(\LL_0+t^2)\cdot \tilde B=t\cdot \dd_0T-\omega(T)\cdot t^2+\dd_0 C -\omega(C)\cdot t.\]
The right hand side of this equality is a polynomial of degree at most 2 in $t$, hence $\tilde B$ is constant in $t$, that is $\tilde B\in \R[x,y,z]$.
Moreover, comparing the components with equal degree in $t$ gives rise to the system of equations on symmetric tensors in $\ms$:
\begin{eqnarray}
 \tilde B&=&-\omega(T),\\
\dd_0 T&=&\omega(C),\\
\LL_0 B_0&=&\dd_0C.\label{eq:LBdC}
\end{eqnarray}

In order to prove that $K$ is of Killing type, we need to show (by Proposition \ref{pro:extK0}) that $B$ is in $\Im(\dd)$ or, equivalently, that $B-\dd P=\tilde B=-\omega(T)$ is in $\Im(\dd)$. This is the last part of the proof.

We apply Lemma \ref{lm:Ap} to $A_p:=C$ and $A_{p-1}:=T$. This is possible because $T$ and $C$ verify the system above, so they clearly satisfy \eqref{Ap-1}. Therefore, there exist sequences of symmetric tensors $\{A_i\}_{i=0}^p$ and $\{S_i\}_{i=0}^p$,  such that $S_i\in\mcQ^i(\xi,J,\LL_0)$ and \eqref{Ai} and \eqref{Ai-1} hold, with the additional property $\omega(A_2)=\lambda \omega(J)$ for some $\lambda\in \R$.

Consider the symmetric $p-2$-tensor in $\mg$ defined as $R=\sum_{i=0}^{p-2}t^i\cdot  A_{p-i}$. Then, by \eqref{eq:dd0omega}, \eqref{Ai} and \eqref{Ai-1}, we have
\begin{eqnarray*}
\dd R&=&\sum_{i=0}^{p-2}t^i \cdot  \dd_0 A_{p-i}-\sum_{i=0}^{p-2}t^{i+1}\cdot \omega (A_{p-i})=\sum_{i=0}^{p-2}t^i\cdot  \dd_0 A_{p-i}-\sum_{i=1}^{p-1}t^{i}\cdot \omega (A_{p-i+1})\\
&=&\dd_0A_{p-2}-t^{p-1}\cdot \omega(A_{2})=\omega(A_{p-1})-t^{p-1}\cdot \omega(A_{2}).
\end{eqnarray*}
Recall that $\omega(A_2)=\lambda\omega(J)$, so 
\[\dd (\lambda t^{p-2}\cdot J)=\lambda t^{p-2} \dd_0J-\lambda t^{p-1}\cdot \omega(J)=-t^{p-1}\cdot\omega(A_2).
\]
 Therefore the previous equation reads $\dd R-\dd(\lambda t^{p-2}J)=\omega(A_{p-1})$ which, by definition of $A_{p-1}$, gives $\dd (R-\lambda t^{p-2}J)=\omega(T)\in \Im(\dd)$ and the result follows.
\end{proof}

\subsection{Solvable case}

Let $(\mg,g)$ be a metric solvable Lie algebra of dimension 4. The commutator ideal $\mg'=[\mg,\mg]$ of such Lie algebra is nilpotent and of dimension $\leq 3$. Due to the classification of nilpotent Lie algebras in small dimension, we know that $\mg'$ is either abelian or isomorphic to the Heisenberg Lie algebra of dimension $3$. 

If $\mg'$ is abelian and 3-dimensional, then Corollary \ref{cor:abideal} implies that $(\mg,g)$ is of Killing type. Other $4$-dimensional solvable Lie algebras may admit codimension one abelian ideals; this is the case when $\dim \mg'=1$ as the following result shows.

\begin{pro} Let $\mg$ be a $4$-dimensional solvable Lie algebra such that $\dim \mg'=1$, then $\mg$ has a codimension one abelian ideal. In particular, $(\mg,g)$ is of Killing type for any metric $g$ on $\mg$.
\end{pro}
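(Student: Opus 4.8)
The plan is to show that any $4$-dimensional solvable Lie algebra $\mg$ with $\dim\mg'=1$ contains a codimension-one abelian ideal, so that the conclusion about being of Killing type follows immediately from Corollary \ref{cor:abideal} applied to an arbitrary metric $g$. Write $\mg'=\langle w\rangle$ for a single nonzero vector $w$. First I would observe that, since $\mg'=[\mg,\mg]$ is $1$-dimensional, the adjoint representation gives a Lie algebra homomorphism $\ad:\mg\lra\mathfrak{gl}(\mg)$ whose image lands in the endomorphisms that vanish on $\mg'$ and map $\mg$ into $\langle w\rangle$; equivalently, for each $x\in\mg$ there is a linear functional $\varphi_x\in\mg^*$ with $[x,y]=\varphi_x(y)\,w$ for all $y$. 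The key structural fact is that $w\in\mg'$ must itself be a fixed vector of every $\ad_x$ (indeed $[x,w]\in\mg'=\langle w\rangle$, and if $[x,w]=cw$ with $c\ne0$ this would force $\ad_x$ to have a nonzero eigenvalue on $\mg'$, contradicting the nilpotency of $\mg'$ that was used to classify $\mg'$ as $1$-dimensional, since a $1$-dimensional ideal in the commutator series is necessarily central in $\mg'$). Hence $w$ is central in $\mg'$, but more is needed.

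Next I would produce the desired abelian ideal explicitly. The natural candidate is $\mh:=\ker\varphi$ for a suitable functional, or more robustly, the kernel of the map $\mg\lra\mg/\mg'$ composed with an appropriate projection; concretely, since all brackets are multiples of $w$, any codimension-one subspace $\mh$ containing $\mg'=\langle w\rangle$ satisfies $[\mh,\mh]\subseteq\mg'\subseteq\mh$, so $\mh$ is automatically a subalgebra, and it is an ideal because $[\mg,\mh]\subseteq\mg'=\langle w\rangle\subseteq\mh$. Thus \emph{every} hyperplane through $w$ is already a codimension-one ideal; the only remaining point is to arrange that one such hyperplane is abelian. For $\mh$ to be abelian we need $[\mh,\mh]=0$, i.e. $\varphi_x(y)=0$ for all $x,y\in\mh$. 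The obstruction is precisely the restriction to $\mh$ of the alternating bilinear form $B(x,y):=\varphi_x(y)$ (valued in $\R$ after fixing the coordinate of $w$), which is the $2$-form measuring the bracket. Since $B$ is alternating on the $4$-dimensional space $\mg$ and is degenerate (its radical contains $w$, as $w$ is central by the previous step), its rank is at most $2$; I would choose $\mh$ to contain both $w$ and a complementary isotropic line for $B$, which is always possible for an alternating form of rank $\le 2$ on a $4$-space.

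The main obstacle I anticipate is the isotropy/rank argument in the last step: one must verify that the alternating form $B$ really has rank at most $2$ so that a codimension-one abelian ideal exists, rather than rank $4$ which would obstruct it. This is where the hypothesis $\dim\mg'=1$ does the essential work, because $B$ takes values in the $1$-dimensional space $\mg'$, so its image spans at most a line and consequently the bracket cannot be a nondegenerate symplectic form; in particular $w$ lies in the radical of $B$ and any maximal isotropic subspace containing $w$ has dimension at least $3$, giving the required abelian ideal. Once $\mh$ is exhibited as a codimension-one abelian ideal, Corollary \ref{cor:abideal} applies verbatim with $t$ a unit vector spanning $\mh^\bot$ and $E:=\mh$ (on which the metric is trivially $\ad_\mh$-invariant since $\mh$ is abelian), yielding that $(\mg,g)$ is of Killing type for every metric $g$.
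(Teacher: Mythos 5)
There is a genuine gap: your key structural claim that the vector $w$ spanning $\mg'$ is fixed by every $\ad_x$, i.e.\ that $\mg'$ is central in $\mg$, is false in general, and the justification you give for it does not hold. Take $\mg=\mathfrak{aff}(\R)\oplus\R^2$ with basis $x,w,e_1,e_2$ and single nonzero bracket $[x,w]=w$: this is solvable with $\mg'=\langle w\rangle$ one-dimensional, yet $[x,w]=w\neq 0$. The nilpotency (indeed abelianity) of the ideal $\mg'$ only constrains $\ad_v|_{\mg'}$ for $v\in\mg'$; it says nothing about the eigenvalue of $\ad_x$ on $\mg'$ for a general $x\in\mg$. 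The error propagates: in this example $w$ does \emph{not} lie in the radical of your alternating form $B$, and your stated reason for $\rk B\le 2$ (that ``$B$ takes values in a line'') is not valid either --- once a coordinate on $\mg'$ is fixed, $B$ is simply a real-valued alternating $2$-form on a $4$-dimensional space, and such a form can a priori have rank $4$. What actually rules this out is the Jacobi identity, which you never invoke.

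The strategy can be repaired, and the repair is essentially the paper's proof. Writing $[v,w]=f(v)w$ for a linear functional $f$ on $\mg$ (possible since $\mg'=\langle w\rangle$ is an ideal), the Jacobi identity applied to $x,y\in\ker f$ and arbitrary $z$ gives $B(x,y)f(z)=0$. Hence either $f\equiv 0$, in which case $w$ \emph{is} central, your radical argument applies, and one can even exhibit an explicit extra radical vector (the paper takes $\gamma x-\beta y$ in a suitable basis); or $f\not\equiv 0$, in which case $B$ vanishes identically on the hyperplane $\ker f$, which is then itself the desired codimension-one abelian ideal. Equivalently, Jacobi says $B\wedge f=0$, which forces $\rk B\le 2$ and the existence of a $3$-dimensional $B$-isotropic subspace through $w$ in all cases. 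Your remaining observations --- that any hyperplane containing $\mg'$ is automatically an ideal, and that Corollary \ref{cor:abideal} then yields the Killing-type conclusion for every metric --- are correct as stated.
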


\begin{proof}
Let $u$ be a unit vector spanning the commutator $\mg'$ of $\mg$. Then, there is a linear map $f:\mg\lra \R$ such that \[
[v,u]=f(v)u, \qquad \mbox{ for all }v\in \mg.
\]

Let $x,y,u$ be an orthonormal basis of $\ker f$ and let $z$ be a unit vector in $(\ker f)^\bot$. The possibly non-vanishing Lie brackets are
\[
[z,u]=f(z) u,\quad [x,y]=\alpha u, \quad [z,x]=\beta u, \quad [z,y]=\gamma u,
\]
where $\alpha,\beta,\gamma\in\R$. The Jacobi identity implies
\[
\alpha f(z) u=[z,[x,y]]=[[z,x],y]+[x,[z,y]]=0.
\]
If $\alpha=0$, then $\ker f$  is an abelian ideal of $\mg$. Otherwise, $f(z)=0$ and $u$ is a central element. In this case, define $v:= \gamma x-\beta y$ if at least one of $\beta$ or $\gamma$ are non-zero, and $v:= x$ if $\alpha=\beta=0$. It is easy  to verify that $\{u,v,z\}$  spans an abelian ideal in $\mg$. 

Therefore, Corollary \ref{cor:abideal} implies that for any metric $g$ in $\mg$, the metric Lie algebra $(\mg,g)$ is of Killing type.
\end{proof}

Our tools fail to show that the remaining metric solvable Lie algebras of dimension 4 are of Killing type. This concerns Lie algebras whose commutator has dimension 2, or dimension 3 and is isomorphic to the Heisenberg Lie algebra. However, we were not able to find conformal Killing tensors on such Lie algebras, which are not of Killing type.

\end{document}